				\newtheorem{thm}{Theorem}[section]
				\newtheorem{lem}[thm]{Lemma}
				\newtheorem{propos}[thm]{Proposition}
				\newcommand{\enter}{\bigskip}
\begin{document}
\thispagestyle{empty}
\author{Sanjiv Kumar Bariwal${}^1$\footnote{{\it{${}$ Email address:}} p20190043@pilani.bits-pilani.ac.in}, Ankik Kumar Giri${}^2$, Rajesh Kumar${}^1$\\
\footnotesize ${}^1$Department of Mathematics, Birla Institute of Technology and Science, Pilani,\\ \small{ Pilani-333031, Rajasthan, India}\\
\footnotesize ${}^2$Department of Mathematics, Indian Institute of Technology Roorkee,\\ \small{ Roorkee-247667, Uttarakhand, India}\\
}

\title{{ Convergence and error analysis for pure collisional breakage equation}}	
									
\maketitle

\begin{quote}
{\small {\em\bf Abstract}}:	 Collisional breakage in the particulate process has a lot of recent curiosity. We study the pure collisional breakage equation which is nonlinear in nature accompanied by  locally bounded breakage kernel and  collision kernel. The continuous equation is discretized using a finite volume scheme (FVS) and the weak convergence of the approximated solution towards the exact solution is analyzed  for non-uniform mesh. The idea of the analysis is based on the weak $L^1$ compactness and a suitable stable condition on time step is introduced. Furthermore, theoretical error analysis is developed for a uniform mesh when kernels are taken in $W_{loc}^{1,\infty}$ space.  The scheme is shown to be first-order convergent which is  verified numerically for three test  examples of the kernels.
	\end{quote}
\textbf{Keywords}: Collisional breakage, Finite volume method, Convergence, Error estimates.
\section{Introduction}
Particulate processes are prominent in the dynamics of particle development and describe how particles might unite to generate larger ones or break into smaller ones.
 Suppose that each particle is entirely defined by a single size variable, such as its volume or mass. Particle breakage is categorized into linear breakage and collision breakage.
 The linear breakage equation's success is well-known  in investigating phenomena of importance in various scientific areas ranging from engineering, see \cite{diemer2021applications, danha2015application}  and further citations. It is believed that its expansion is required to broaden the variety of procedures that may be evaluated and to increase analysis quality. One conceivable expansion is to include nonlinearity in the breaking process which can occur when the breakage behaviour of a particle is determined not only by its characteristics and dynamic circumstances (as in linear breakage), but also by the state and properties of the entire system, i.e., by binary interactions, collisional breakage could enable some mass transfer between colliding particles. As a result, daughter particles with more extensive volumes than the parent particles are generated.  Non-linear models emerge in a wide range of contexts, including milling and crushing processes \cite{spampinato2017modelling,chen2020collision}, bulk distribution of asteroids \cite{kudzotsa2013mechanisms,yano2016explosive}, fluidized beds \cite{cruger2016coefficient,lee2017development}, etc. \\
Cheng and Redner \cite{cheng1988scaling} used the following integro-differential equation to derive the collisional breakage equation (CBE). It illustrates the time progession of  particle size distribution  $c(t,x)\geq 0$ of particles of mass $x \in ]0,\infty[$ at time $t\geq 0$ and is defined by
\begin{align}\label{maineq}
\frac{\partial{c(t,x)}}{\partial t}=  \int_0^\infty\int_{x}^{\infty} K(y,z)b(x,y,z)c(t,y)c(t,z)\,dy\,dz -\int_{0}^{\infty}K(x,y)c(t,x)c(t,y)\,dy
\end{align}
with the given initial data 
\begin{align}\label{initial}
c(0,x)\ \ = \ \ c^{in}(x) \geq 0, \ \ \ x \in ]0,\infty[.
\end{align}
The characteristics $t$ and $x$ are regarded as dimensionless quantities without losing any generality. In Eq.(\ref{maineq}), the collision kernel $K(x,y)$ depicts the rate of  collision for breakage event  between two particles of volumes $x$ and $y$. In practice, it is assumed that the  collision rate between the  particles of volumes $x$ and $y$ is identical to the collision between $y$ and $x$. The term  $b(x,y,z)$ is called the breakage distribution function, which defines the rate for production of particles of volume $x$  by breakage of  particle of volume $y$ due to interaction between particles of volumes of $y$ and $z$. Breakage distribution function $b$ holds
$$b(x,y,z)\neq 0\,\,\, \text{for} \hspace{0.4cm} x \in (0,y)\,\,\, \text{and} \hspace{0.4cm} b(x,y,z)=0 \,\,\,\text{for} \hspace{0.4cm} x> y$$
as well as satisfies
$$\int_{0}^{y}xb(x,y,z)\,dx=y$$
for all $(y,z) \in {]0,\infty[}^{2}$.  The first term in Eq.(\ref{maineq}) explains gaining particles of volume $x$ due to collision between particles of volumes $y$ and $z$, known as the birth term. The second term is labeled as the death term and describes the disappearance of particles of volume $x$ due to collision with particles of volume $y$. \\

It is also necessary to specify some integral features of the number density function $c(t,x)$, known as moments. The following equation defines the $j^{th}$ moment of the solution as
\begin{align}\label{momemt}
M_{j}(t)=\int_{0}^{\infty}x^{j}c(t,x)\,dx.
\end{align}
The zeroth and first moments are proportional to the total number of particles in the system and its total volume, respectively. Here, $M_{1}^{in}$ denotes the initial volume of the particles in the closed particulate system.\\

Before venturing into the specifics of the current work, let us review the existing literature on the linear breakage equation that has been widely investigated over the years concerning its analytical solutions \cite{ziff1991new}, similarity solutions \cite{peterson1986similarity,breschi2017note}, numerical results \cite{ hosseininia2006numerical,kumar2008convergence}.

 There is a substantial body of work on the well-posedness of the coagulation and linear breakage equations (CLBE). The authors examined the existence and uniqueness of solutions to CLBE with nonsingular coagulation kernels with different growth parameters on the breakage function in \cite{barik2018note,ghosh2018existence,saha2015singular}. Moreover, many publications are accessible for solving coagulation-fragmentation equations numerically, including the method of moments \cite{attarakih2009solution}, finite element scheme \cite{ahmed2013stabilized}, Monte Carlo methodology \cite{lin2002solution}, and  finite volume method (FVM)\cite{kumar2013moment,bourgade2008convergence,forestier2012finite} to name a few. It has been reported by several authors that the FVM is an appropriate option  among the other  numerical techniques  for solving coagulation-fragmentation equations due to its mass conservation property.\\

Collisional breakage model is discussed in just a few mathematical articles in the literature, see \cite{laurenccot2001discrete,walker2002coalescence, barik2020global}. The authors explained the global classical solutions of coagulation and collisional equation with collision kernel, growing indefinitely for large volumes in \cite{barik2020global}. In addition, fewer publications in the physics literature \cite{cheng1990kinetics,ernst2007nonlinear,krapivsky2003shattering} are committed to the collision breakage equation, with the majority dealing with scaling behavior and shattering transitions.  Lauren\c{c}ot and Wrzosek \cite{laurenccot2001discrete} investigated the existence of a solution for discrete collisional breakage with coagulation equation in which they have used the following constraint over the kernels 
$$K(x,y) \leq (xy)^{\alpha}, \,\, \alpha\in [0,1) \,\, \text{and}\,\, b(x,y,z)\leq P< \infty,\,\, \text{for}\,\, 1\leq x< y.$$
 They have also explored gelation and the long-term behavior of solutions. Further, in \cite{giri2021weak}, the analysis is worked out with the coagulation dominating process for mass conserving solutions when the collision kernel grows at most linearly at infinity. To the best of the authors' knowledge, none of the prior studies account for the weak convergence of the numerical scheme for solving collisional breakage equation. Therefore, this article is an attempt to study the  weak convergence analysis of the model for nonsingular unbounded kernels in a numerical sense and then error estimation for kernels in $W_{loc}^{1,\infty}$ space over a uniform mesh. Thanks to the idea taken from  Bourgade and Filbet \cite{bourgade2008convergence}, in which they have treated coagulation and binary fragmentation equation. The proof is based on the weak $L^1$ compactness method.\\

To proceed further, firstly, we will concentrate on the functional setting as having in mind that expected mass conservation in (\ref{momemt}) is necessary. Besides the first moment, the total number of particles in the system must be finite. Therefore, we construct the solution space that exhibits the convergence of the discretized numerical solution to the weak solution of the collisional breakage equation (\ref{maineq}), which is recognized as a weighted $L^1$ space such as 
	
	\begin{align*}
		X^{+} = \{c\in L^{1}(\mathbb{R}^{+})\cap L^1(\mathbb{R}^{+}, x\,dx): c\geq 0, \|c\|< \infty\},
	\end{align*}
where $\|c\|= \int_{0}^{\infty}(1+x)c(x)\,dx,$ for the non-negative initial condition $c^{in} \in X^{+}$ and $\mathbb{R}^{+}=]0,\infty[.$ Here the notation $L^1({\mathbb{R}}^{+}, xdx)$ stands for the space of the Lebesgue measurable real-valued functions on $\mathbb{R}^{+}$ which are integrable with respect to the measure $x\, dx.$\\
	
Now, the specifications of the collisional kernel $K$ and breakage distribution function $b$ are expressed in the following expression: both functions are symmetric and measurable over the domain.	There exist  $\zeta, \eta$  with $0<\zeta \leq \eta \leq 1, \, \zeta+ \eta \leq 1 $ and $\alpha \in \mathbb{R}$, $\lambda >0$ such that 	
\begin{align}\label{breakage funcn}
H1:  \hspace{0.2cm}	b\in L_{\text{loc}}^\infty{(\mathbb{R}^{+} \times \mathbb{R}^{+} \times \mathbb{R}^{+}) },
\end{align}
H2:  
\begin{equation}\label{Collisional func}
K(x,y)=\left\{
 \begin{array}{ll}
				           
 \lambda xy & \quad (x,y)\in (0,1)\times (0,1)\\
 \lambda xy^{-\alpha} & \quad (x,y)\in (0,1)\times (1,\infty)\\
 \lambda x^{-\alpha}y & \quad (x,y)\in (1,\infty)\times (0,1)\\
\lambda(x^{\zeta}y^{\eta}+x^{\eta}y^{\zeta}) & \quad (x,y)\in (1,\infty)\times (1,\infty). 
 \end{array}
\right.
\end{equation}
This article's contents are organized as follows. The discretization methodology based on the FVM and non-conservative form of fully discretized CBE are introduced in Section \ref{scheme}, followed by the detailed convergence analysis in Section \ref{convergence}. In Section \ref{Error}, we examine the first order error estimates of FVM on uniform meshes. Additionally, we have justified  the theoretical error estimation via numerical results in section \ref{testing}. Consequently, in the final Section, some  conclusions are presented.
\section{Numerical Scheme}\label{scheme}
In this section, we commence exploring the FVM for the solution of Eq.(\ref{maineq}). It is based on the spatial domain being divided into tiny grid cells.
 Particle volumes ranging from $0$ to $\infty$ are taken into account in Eq.(\ref{maineq}). Nevertheless, we define the particle volumes to be in a finite domain for practical purposes. Consider the reduced computational domain for volumes (0,$R$], with $0<R <\infty$. Thus the collisional breakage equation is  truncated as 
		\begin{align}\label{trunceq}
				\frac{\partial{c(t,x)}}{\partial t}=  \int_0^R\int_{x}^{R} K(y,z)b(x,y,z)c(t,y)c(t,z)\,dy\,dz -\int_{0}^{R}K(x,y)c(t,x)c(t,y)\,dy
					\end{align}
					with the given initial distribution 
					\begin{align}\label{initial}
								c(0,x)\ \ = \ \ c^{in}(x) \geq 0, \ \ \ x \in ]0,R].
					\end{align}	
					
Consider a partitioning  of the operating domain $(0,R]$ into small cells as $\Lambda_i^h:=]x_{i-1/2}, x_{i+1/2}], \,\,i=1,2, . . . , \mathrm{I}$,	where,\,\, $x_{1/2}=0, \ \ x_{\mathrm{I}+1/2}= R, \hspace{0.2cm} \Delta x_i=x_{i+1/2}-x_{i-1/2}$ 
and  consider	$	h= max \, \Delta x_i \  \forall \ i. $ 	The grid points are the midpoints of each subinterval and are designated as $$x_i=(x_{i-1/2}+x_{i+1/2})/2 \hspace{0.3cm} \text{for} \,\,  i=1,2, . . . , \mathrm{I}.$$
Now, the expression of the mean value of the number density function $c_i(t)$ in the cell $\Lambda_i^h$ is determined  by 	
		\begin{align}\label{meandens}
							c_{i}(t)=\frac{1}{\Delta x_i}\int_{x_{i-1/2}}^{x_{i+1/2}}c(t,x)\,dx,
						\end{align}
where   $\Delta x_i=x_{i+1/2}-x_{i-1/2}$ for $i=1,2, . . . , \mathrm{I}.$	The domain is confined in the range [0, T] for the time parameter, and it is discretized into $N$ time intervals with time step $\Delta t$. The interval is defined as 		
$$ \tau_n=[t_n,t_{n+1}[ \hspace{0.2cm} \text{with}\,\, t_n=n\Delta t,\  n=0,1,...,N-1.$$ 
We now begin developing the scheme on non-uniform meshes. It has the significant advantage of allowing the inclusion of a more extensive domain with fewer mesh points than a uniform mesh. The discretization differs slightly from that of Filbet and Laurencot \cite{bourgade2008convergence},  where they first converted the model (\ref{maineq}) to a conservative equation using Leibniz integral rule, then discretized using FVM.  Although, in this work, we have developed a non-conservative scheme using FVM from the continuous equation (\ref{maineq}).

 To derive the discretized version of the CBE (\ref{trunceq}), we  proceed as follows: integrate the Eq.(\ref{trunceq}) with respect to $x$ over  $i^{th}$ cell yields the following discrete form
\begin{align}\label{semi}
\frac{dc_i}{dt}=B_{C}(i)-D_{C}(i),
\end{align}
where
\begin{align*}
B_{C}(i)=\frac{1}{\Delta x_i}\int_{x_{i-1/2}}^{x_{i+1/2}}\int_0^{x_{\mathrm{I}+1/2}}\int_{x}^{x_{\mathrm{I}+1/2}} K(y,z)b(x,y,z)c(t,y)c(t,z)dy\,dz\,dx
\end{align*}
\begin{align*}
D_{C}(i)= \frac{1}{\Delta x_i}\int_{x_{i-1/2}}^{x_{i+1/2}}\int_0^{x_{\mathrm{I}+1/2}} K(x,y)c(t,x)c(t,y)dy\,dx
\end{align*}
along with initial distribution,
\begin{align}
c_{i}(0)=c_{i}^{in}=\frac{1}{\Delta x_i}\int_{x_{i-1/2}}^{x_{i+1/2}}c_{0}(x)\,dx.
\end{align}	
Implementing the midpoint rule to all of the above representation yields the semi-discrete equation after some simplifications as

\begin{align}\label{semidiscrete}
\frac{dc_{i}}{dt}=&\frac{1}{\Delta x_i}\sum_{l=1}^{\mathrm{I}}\sum_{j=i}^{\mathrm{I}}K_{j,l}c_{j}(t)c_{l}(t)\Delta x_{j}\Delta x_{l}\int_{x_{i-1/2}}^{p_{j}^{i}}b(x,x_{j},x_{l})\,dx-\sum_{j=1}^{\mathrm{I}}K_{i,j}c_{i}(t)c_{j}(t)\Delta x_{j}, 
\end{align}
where the term $p_{j}^{i}$ is expressed by
\begin{equation}
p_{j}^{i} =
\begin{cases}
x_{i}, & \text{if }\,j=i \\
x_{i+1/2}, & j\neq i.							
\end{cases}
\end{equation}
Now, to obtain a fully discrete system, applying explicit Euler discretization to time variable $t$ leads to
\begin{align}\label{fully}
c_{i}^{n+1}-c_{i}^{n}=&\frac{\Delta t}{\Delta x_{i}}\sum_{l=1}^{\mathrm{I}}\sum_{j=i}^{\mathrm{I}}K_{j,l}c_{j}^{n}c_{l}^{n}\Delta x_{j}\Delta x_{l}\int_{x_{i-1/2}}^{p_{j}^{i}}b(x,x_{j},x_{l})\,dx  \nonumber\\
&-\Delta t \sum_{j=1}^{\mathrm{I}}K_{i,j}c_{i}^{n}c_{j}^{n}\Delta x_{j}.
\end{align}
For the convergence analysis, consider a function $c^h$ on $[0,T]\times ]0,R]$ which is representated  by
\begin{align}\label{chap2:function_ch}
c^h(t,x)=\sum_{n=0}^{N-1}\sum_{i=1}^{\mathrm{I}}c_i^n\,\chi_{\Lambda_i^h}(x)\,\chi_{\tau_n}(t),
\end{align}
where $\chi_D(x)$ denotes the characteristic function on a set $D$ as $\chi_D(x)=1$ if $x\in D$ or $0$ everywhere else. Also noting that $$c^h(0,\cdot)=\sum_{i=1}^{\mathrm{I}}c_i^{in} \chi_{\Lambda_i^h}(\cdot)$$
converges strongly to $c^{in}$ in $L^1((0,R))$ as $h\rightarrow 0$. A finite volume approximation approaches the kernels on each space cell, i.e., for all $(u,v)\in ]0,R]\times ]0,R]$ and $(u,v,w)\in ]0,R]\times ]0,R]\times ]0,R],$
\begin{align}\label{chap2:function_aggregatediscrete}
	K^h(u,v)= \sum_{i=1}^{\mathrm{I}} \sum_{j=1}^{\mathrm{I}} K_{i,j} \chi_{\Lambda_i^h}(u) \chi_{\Lambda_j^h}(v),
\end{align}
\begin{align}\label{chap2:function_brkdiscrete}
	b^h(u,v,w)= \sum_{i=1}^{\mathrm{I}} \sum_{j=1}^{\mathrm{I}}\sum_{l=1}^{\mathrm{I}} b_{i,j,l} \chi_{\Lambda_i^h}(u) \chi_{\Lambda_j^h}(v)\chi_{\Lambda_l^h}(w),
	\end{align}

where $$K_{i,j}= \frac{1}{\Delta x_i \Delta x_j} \int_{\Lambda_j^h} \int_{\Lambda_i^h} K(u,v)du\,dv, \quad  b_{i,j,l}= \frac{1}{\Delta x_i \Delta x_j \Delta x_l}\int_{\Lambda_l^h} \int_{\Lambda_j^h} \int_{\Lambda_i^h} b(u,v,w)du\,dv\,dw.$$
Such discretization ensures that {$\|K^h-K\|_{L^1((0,R)\times (0,R))}\rightarrow 0$}, {$\|b^h-b\|_{L^1((0,R)\times (0,R)\times (0,R))}\rightarrow 0$} as $h\rightarrow 0$, see \cite{bourgade2008convergence}.

\section{Weak Convergence}\label{convergence}
The objective of this section is to study the convergence of solution $c^h$ to a function $c$  as $h$ and $\Delta t$ $\rightarrow 0$. 

\begin{thm}\label{maintheorem}
	Consider that $c^{in}\in X^+$ and the hypothesis $(H1)-(H2)$ on kernels hold. Also assuming that under the time step $\Delta t$ and for a constant $\theta> 0$, the following stability condition
						\begin{align}\label{22}
							C(R, T)\Delta t\le \theta< 1,
						\end{align}
						holds for 
						\begin{align}{\label{23}}
							C(R, T):= \lambda(2R  \|c^{in}\|_{L^1}\,e^{2\lambda R \|b\|_{L^{\infty}} M_{1}^{in} T} + M_{1}^{in}).
						\end{align}
					Then there exists the extraction of a sub-sequence as $$c^h\rightarrow c\ \
						\text{in}\ \ L^\infty((0,T;L^1\,(0,R)),$$
						for c being the weak solution to (\ref{maineq}) on $[0,T]$ with initial
						datum $c^{in}$. This implies that, the function $c\geq 0$ satisfies\\
						\begin{equation}
							\begin{aligned}\label{convergence0}
								{}	&	\int_0^T\int_0^{R} c(t,x)\frac{\partial\varphi}{\partial
									t}(t,x)dx\,dt +\int_0^{R} c^{in}(x)\varphi(0,x)dx \\
								&-\int_0^T\int_0^{R} \int_0^{R} \int_{x}^{R}\varphi(t,x)K(y,z)b(x,y,z)c(t,y)c(t,z)dy\,dz\,dx\,dt\\
								&+ \int_0^T \int_0^{R} \int_{0}^{R}\varphi(t,x)K(x,y)c(t,x)c(t,y)dy\,dx\,dt
							=0,
							\end{aligned}
						\end{equation}
						for all smooth functions $\varphi$ having compact support in $[0,T]\times ]0,{R}].$
					\end{thm}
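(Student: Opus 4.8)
The plan is to follow the weak $L^1$ compactness strategy of Bourgade and Filbet, adapted here to the non-conservative birth/death form of the collisional scheme, and to organize the argument into four blocks: discrete a priori bounds, equi-integrability, time compactness, and passage to the limit in the weak formulation. First I would extract the a priori estimates directly from the update rule (\ref{fully}). Non-negativity of $c_i^n$ follows by induction on $n$: rewriting (\ref{fully}) as $c_i^{n+1}=c_i^n\big(1-\Delta t\sum_j K_{i,j}c_j^n\Delta x_j\big)+(\text{non-negative birth terms})$, the stability condition (\ref{22})--(\ref{23}) is exactly what keeps the bracketed factor non-negative, so $c_i^n\ge 0$ for all $i,n$. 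I would then bound the discrete first moment $\sum_i x_i c_i^n\Delta x_i$ using the discrete form of the normalization $\int_0^y x\,b(x,y,z)\,dx=y$, showing it stays controlled by $M_1^{in}$, and bound the discrete number $\sum_i c_i^n\Delta x_i$, whose growth is driven by the birth term and produces, via a discrete Gronwall estimate, exactly the exponential factor $e^{2\lambda R\|b\|_{L^\infty}M_1^{in}T}$ in $C(R,T)$. Together these give a uniform bound on $\int_0^R(1+x)\,c^h(t,x)\,dx$, i.e. on $\|c^h\|_{L^\infty(0,T;X^+)}$.

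The uniform $L^1$ bound alone does not give weak compactness, so the second block is equi-integrability. Since $c^{in}\in L^1$, the De la Vall\'ee--Poussin theorem supplies a convex $\Phi\ge 0$ with $\Phi(r)/r\to\infty$ and $\int_0^R\Phi(c^{in})\,dx<\infty$; I would propagate a uniform-in-time bound on $\int_0^R\Phi(c^h(t,x))\,dx$ along the scheme, using convexity of $\Phi$ and the bounds from the first block. By Dunford--Pettis this yields a subsequence with $c^h\rightharpoonup c$ weakly in $L^1((0,R))$ for a.e.\ $t$, and $c\ge 0$ since the non-negative cone is weakly closed. To upgrade to convergence in the sense of the theorem (weak in $x$, continuous in $t$), I would derive a weak time-equicontinuity estimate: testing $c^h(t)-c^h(s)$ against a smooth $\psi$ and invoking (\ref{fully}) bounds $|\langle c^h(t)-c^h(s),\psi\rangle|$ by $C\,|t-s|\,\|\psi\|_{W^{1,\infty}}$, after which a variant of the Arzel\`a--Ascoli theorem for the weak topology gives the asserted convergence.

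The final and hardest block is passing to the limit in the weak form of (\ref{fully}). Multiplying the scheme by $\varphi(t_n,x_i)$, summing over $i$ and $n$, and carrying out a discrete summation by parts in time produces a discrete analogue of (\ref{convergence0}); the time-derivative and initial-data terms converge by the strong convergence of $c^h(0,\cdot)$ and the time-compactness above. The genuine difficulty is the two quadratic terms, which are products of two weakly-$L^1$-convergent sequences and so do not pass to the limit termwise. My plan is to first use Fubini on the birth term to absorb the $x$-integration of $\varphi$ against $b$ into a bounded weight $\tilde\varphi(t,y,z)=\int_0^y\varphi(t,x)\,b(x,y,z)\,dx$, reducing both quadratic terms to a bilinear form $\int_0^T\int_0^R\int_0^R\Psi(t,y,z)\,K^h(y,z)\,c^h(t,y)\,c^h(t,z)\,dy\,dz\,dt$ with $\Psi$ bounded; this is where truncation to $(0,R]$ is essential, since $K$ from (\ref{Collisional func}) and $b$ from (\ref{breakage funcn}) are then bounded on the computational domain.

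I would then exploit a weak--strong decoupling of the nonlinearity: for fixed $y$ the inner integral against $c^h(z)$ converges by weak $L^1$ convergence, the resulting functions are uniformly bounded and, thanks to the equi-integrability of the second block, equicontinuous in the weak sense, so that the outer integral against $c^h(y)$ converges as a weak-times-strong product to the correct limit $\int_0^T\int_0^R\int_0^R\Psi\,K\,c\,c\,dy\,dz\,dt$. The replacement of $K^h$ by $K$ and $b^h$ by $b$ is controlled by their strong $L^1$ convergence combined with the uniform bounds on $c^h$, while the midpoint-quadrature and $p_j^i$-cutoff errors in (\ref{fully}) vanish as $h\to 0$ by consistency. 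I expect this decoupling of the quadratic terms, which must reconcile the merely weak convergence of $c^h$ with the strong convergence of the kernels, to be the main obstacle, and the equi-integrability estimate to be the technical ingredient that makes it go through.
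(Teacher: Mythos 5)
You follow the same overall framework as the paper: non-negativity and the $L^1$ bound by induction on (\ref{fully}) under the stability condition (\ref{22})--(\ref{23}), equi-integrability via De la Vall\'ee Poussin and propagation of $\int\Phi(c^h)\,dx$ along the scheme, Dunford--Pettis compactness, and a weak--strong product lemma (the paper's Lemma \ref{Wconverge}) to pass to the limit in the weak formulation. The genuine difference is in how the quadratic terms are justified. The paper rewrites the discrete birth and death sums as integrals of $K^h(y,z)\,c^h(t,y)\,c^h(t,z)$ against bounded, a.e.-convergent factors (Eqs.~(\ref{convergence2})--(\ref{convergence6})) and then simply cites Lemma \ref{Wconverge} and Proposition \ref{equiintegrability}; this tacitly assumes that the \emph{diagonal} product $c^h(t,y)\,c^h(t,z)$ converges weakly in $L^1((0,T)\times(0,R)^2)$, which does not follow from weak convergence of $c^h$ in $L^1((0,T)\times(0,R))$ alone. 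Your third block --- the weak time-equicontinuity estimate $|\langle c^h(t)-c^h(s),\psi\rangle|\le C\,|t-s|\,\|\psi\|_{W^{1,\infty}}$ plus a weak-topology Arzel\`a--Ascoli argument giving $c^h(t)\rightharpoonup c(t)$ for every $t$ --- combined with your decoupling of the inner integral into a bounded, pointwise-convergent factor $g^h(t,y)=\int_0^R\Psi(t,y,z)K(y,z)\,c^h(t,z)\,dz$, is precisely the justification the paper leaves implicit; it is how Bourgade--Filbet and Lauren\c{c}ot--Mischler make this step rigorous. Similarly, your plan to control the discrete first moment via the normalization $\int_0^y x\,b(x,y,z)\,dx=y$ addresses a bound ($\sum_j x_jc_j^n\Delta x_j\le M_1^{in}$) that the paper invokes in its induction without proof. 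In short, the paper's write-up is shorter because it delegates the hard nonlinear step to a citation, while yours is self-contained exactly where the paper is thinnest; the one imprecision to fix is your remark that Dunford--Pettis alone already yields $c^h(t)\rightharpoonup c(t)$ for a.e.\ $t$ --- it does not, but your subsequent equicontinuity step supplies what is actually needed, so the proposal as a whole is sound.
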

	Following the preceding theorem, it is evident that the main motivation here is to demonstrate the weak convergence of the family of functions $(c^h)$ to a function $c$ in $L^1(0,R)$ as $h$ and $\Delta t$ approach zero. The idea is based on the Dunford-Pettis theorem, which establishes a necessary and sufficient condition for $L^1$ compactness in the presence of weak convergence. 

	\begin{thm}\label{maintheorem1}
	Let us take : $|\Omega|<\infty$ and $c^h:\Omega\mapsto \mathbb{R} $ be a sequence in $L^1(\Omega).$
	Assume that the sequence $\{c^h\}$ satisfies
	\begin{itemize}
	\item $\{c^h\}$ is equibounded in $L^1(\Omega)$, i.e.\
	\begin{align}\label{equiboundedness}
	\sup \|c^h\|_{L^1(\Omega)}< \infty
	\end{align}
	\item $\{c^h\}$ is equiintegrable, iff
	\begin{align}\label{equiintegrable}
	\int_\Omega \Phi(|c^h|)dx< \infty
	\end{align}
	for  $\Phi$ being some increasing function taken as $\Phi:[0,\infty[\mapsto [0,\infty[$ such that
	\begin{align*}
	\lim_{r\rightarrow \infty}\frac{\Phi(r)}{r}\rightarrow
								\infty.
		\end{align*}
			\end{itemize}
	Then $c^h$ belongs to a weakly compact set in $L^1(\Omega)$ implying that there is a  subsequence of $c^h$ that weakly converges in $L^1(\Omega)$.
	\end{thm}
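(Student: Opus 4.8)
The plan is to recognize Theorem~\ref{maintheorem1} as the conjunction of two classical facts: the de la Vallée-Poussin criterion, which converts the superlinear-growth hypothesis \eqref{equiintegrable} into uniform integrability, and the Dunford-Pettis theorem, which characterizes the relatively weakly sequentially compact subsets of $L^1(\Omega)$ (for $|\Omega|<\infty$) precisely as the bounded, uniformly integrable families. Hypothesis \eqref{equiboundedness} supplies boundedness directly, so the entire analytic content of the argument is the extraction of uniform integrability from \eqref{equiintegrable}.

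First I would make the growth hypothesis quantitative. Setting $\alpha(M):=\inf_{r\ge M}\Phi(r)/r$, the assumption $\Phi(r)/r\to\infty$ forces $\alpha(M)\to\infty$ as $M\to\infty$. On the set $\{|c^h|>M\}$ one has $|c^h|\le \Phi(|c^h|)/\alpha(M)$, whence
\begin{align*}
\int_{\{|c^h|>M\}}|c^h|\,dx \le \frac{1}{\alpha(M)}\int_\Omega \Phi(|c^h|)\,dx \le \frac{C}{\alpha(M)},
\end{align*}
where $C:=\sup_h\int_\Omega\Phi(|c^h|)\,dx<\infty$ by \eqref{equiintegrable}. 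Since the right-hand side is independent of $h$ and tends to $0$, this yields $\lim_{M\to\infty}\sup_h\int_{\{|c^h|>M\}}|c^h|\,dx=0$, i.e.\ uniform integrability in its tail form.

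Next I would convert this tail estimate into the absolute-continuity form that Dunford-Pettis requires. For any measurable $A\subset\Omega$ and any threshold $M$, splitting the integral over $A\cap\{|c^h|\le M\}$ and $A\cap\{|c^h|>M\}$ gives $\int_A|c^h|\,dx\le M|A|+\sup_h\int_{\{|c^h|>M\}}|c^h|\,dx$. Given $\varepsilon>0$, I would first choose $M$ so the second term is below $\varepsilon/2$, and then set $\delta:=\varepsilon/(2M)$, so that $\int_A|c^h|\,dx<\varepsilon$ whenever $|A|<\delta$, uniformly in $h$; here the finiteness of $|\Omega|$ is precisely what keeps the term $M|A|$ under control.

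Finally, with equiboundedness and uniform integrability both in hand, I would invoke the Dunford-Pettis theorem to conclude that $\{c^h\}$ lies in a relatively weakly compact subset of $L^1(\Omega)$, and the Eberlein-Šmulian theorem to pass from weak compactness to weak \emph{sequential} compactness, thereby extracting a weakly convergent subsequence. The main obstacle is genuinely the Dunford-Pettis step: unlike the elementary truncation estimates above, it is a nontrivial structural theorem about $L^1$ (its proof rests on duality with $L^\infty$ together with a measure-theoretic argument), so in practice I would cite it rather than reprove it and treat the de la Vallée-Poussin reduction as the part to be carried out in detail.
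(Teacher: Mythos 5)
Your proposal is correct, but note that the paper does not actually prove this statement at all: it is presented as a known compactness criterion (the Dunford--Pettis theorem combined with the de la Vall\'ee Poussin characterization of uniform integrability), cited and then used as a black box; the paper's real work goes into the subsequent propositions verifying its two hypotheses for the particular family $c^h$ generated by the scheme. What you have done is supply the standard proof that the paper omits, and your reduction is the right one: the quantitative truncation with $\alpha(M)=\inf_{r\ge M}\Phi(r)/r$, the passage from the tail form of uniform integrability to the absolute-continuity form (where $|\Omega|<\infty$ is indeed what controls the $M|A|$ term), and finally Dunford--Pettis plus Eberlein--\v{S}mulian to extract a weakly convergent subsequence. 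Two small remarks. First, you silently (and correctly) strengthened the paper's hypothesis \eqref{equiintegrable}: as written it only asserts finiteness of $\int_\Omega\Phi(|c^h|)\,dx$ for each $h$, which is not enough; your constant $C=\sup_h\int_\Omega\Phi(|c^h|)\,dx<\infty$ is the reading the theorem needs (and is what the paper's Proposition \ref{equiintegrability} actually establishes, via the uniform bound $A^n\mathcal{I}+B(A^n-1)/(A-1)$). Second, since the deep step in your argument is still an appeal to Dunford--Pettis, your proof and the paper's treatment ultimately rest on the same external theorem; the added value of your write-up is that it isolates exactly which part is elementary (the de la Vall\'ee Poussin reduction) and which part is structural, which the paper's statement blurs by folding both into a single ``theorem.''
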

	As a result, demonstrating the equiboundedness and equiintegrability of the family $c^h$ in $L^1$ as in (\ref{equiboundedness}) and (\ref{equiintegrable}), respectively, is sufficient to establish Theorem \ref{maintheorem}. The following proposition addresses the  non-negativity and equiboundedness of the functions $c^h$. For the proof, we employed Bourgade and Filbet's approach \cite{bourgade2008convergence}.
	
\begin{propos}
{Assume that the stability criterion (\ref{22}) holds for time step $\Delta t$.
Furthermore, assuming that the kernel growth condition satisfies $(H1)-(H2)$. Then $c^h$ is a non-negative function that fulfills the estimation given below
\begin{align}\label{36}
\int_0^R c^h(t,x)dx \leq  \|c^{in}\|_{L^1}\,e^{2\lambda R \|b\|_{L^{\infty}} M_{1}^{in} t}.
\end{align}
}
\end{propos}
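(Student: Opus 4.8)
The plan is to establish both claims---nonnegativity and the integral bound---together, by a single induction on the time level $n$, because they are coupled: preserving positivity at step $n{+}1$ requires an a priori control on the size of $c^n$, while the integral estimate is only meaningful once positivity is known. Set $N^n:=\sum_{i=1}^{\mathrm I}c_i^n\Delta x_i$, so that $N^n=\int_0^R c^h(t,x)\,dx$ for $t\in\tau_n$, and let $M_1^n:=\sum_{i=1}^{\mathrm I}x_ic_i^n\Delta x_i$ denote the discrete first moment. The induction hypothesis at level $n$ reads: $c_i^n\ge 0$ for all $i$, and $N^n\le\|c^{in}\|_{L^1}e^{2\lambda R\|b\|_{L^\infty}M_1^{in}t_n}$. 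The base case $n=0$ is clear from $c_i^{in}\ge 0$ and $N^0=\int_0^R c^{in}\,dx\le\|c^{in}\|_{L^1}$.

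For the positivity step I would rewrite the update (\ref{fully}) as $c_i^{n+1}=c_i^n\bigl(1-\Delta t\sum_{j=1}^{\mathrm I}K_{i,j}c_j^n\Delta x_j\bigr)+G_i^n$, where $G_i^n$ is the birth double sum. By the induction hypothesis and $K,b\ge 0$, the term $G_i^n$ is nonnegative, so it suffices to show the survival factor in brackets is nonnegative. This is exactly where (H2) is used: examining the four regimes of (\ref{Collisional func}) one gets, on the truncated domain $(0,R]^2$, a pointwise bound of the type $K(x,y)\le\lambda(2R+y)$, hence $K_{i,j}\le\lambda(2R+x_j)$ after cell averaging and $\sum_{j}K_{i,j}c_j^n\Delta x_j\le\lambda\bigl(2R\,N^n+M_1^n\bigr)$. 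Using $N^n\le N_{\max}:=\|c^{in}\|_{L^1}e^{2\lambda R\|b\|_{L^\infty}M_1^{in}T}$ (from the hypothesis, since $t_n\le T$) together with the control $M_1^n\le M_1^{in}$ gives $\sum_j K_{i,j}c_j^n\Delta x_j\le C(R,T)$ with $C(R,T)$ as in (\ref{23}); the stability condition (\ref{22}) then forces $\Delta t\sum_j K_{i,j}c_j^n\Delta x_j\le\theta<1$, so the survival factor is $\ge 1-\theta>0$ and $c_i^{n+1}\ge 0$.

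For the integral bound I would multiply (\ref{fully}) by $\Delta x_i$ and sum over $i$. The death contribution is nonnegative and may be dropped, leaving $N^{n+1}\le N^n+G^n$ with $G^n$ the summed birth term. Interchanging the order of summation ($\sum_i\sum_{j\ge i}\to\sum_j\sum_{i\le j}$) and using the telescoping identity $\sum_{i=1}^{j}\int_{x_{i-1/2}}^{p_j^i}b(x,x_j,x_l)\,dx=\int_0^{x_j}b(x,x_j,x_l)\,dx\le\|b\|_{L^\infty}x_j$---which is precisely where the definition of $p_j^i$ is exploited---reduces the birth term to $G^n\le\Delta t\,\|b\|_{L^\infty}\sum_{j,l}K_{j,l}\,x_j\,c_j^nc_l^n\Delta x_j\Delta x_l$. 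Bounding $K_{j,l}\le 2\lambda R$ and factoring the double sum as $\bigl(\sum_j x_jc_j^n\Delta x_j\bigr)\bigl(\sum_l c_l^n\Delta x_l\bigr)=M_1^nN^n$, then using $M_1^n\le M_1^{in}$, I obtain $N^{n+1}\le N^n\bigl(1+2\lambda R\|b\|_{L^\infty}M_1^{in}\Delta t\bigr)$. The elementary inequality $1+a\le e^{a}$ and iteration from $n=0$, recalling $t_{n+1}=t_n+\Delta t$, close the induction and yield the claimed estimate for every $t\in\tau_n$.

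The delicate points, and the ones I expect to take the most care, are the reorganisation of the birth term and the control of the discrete first moment. The former must recover exactly $\int_0^{x_j}b\,dx$ from the telescoping so that the $b$-factor turns into the clean weight $x_j$ and the double sum factorises into $M_1^nN^n$; any other treatment would spoil the precise exponent $2\lambda R\|b\|_{L^\infty}M_1^{in}$. The latter is more subtle because the scheme is non-conservative, so $M_1^n$ is not exactly preserved; establishing $M_1^n\le M_1^{in}$ (or at least a uniform bound that can be absorbed into the constants) is the genuine obstacle, and it is what fixes the appearance of $M_1^{in}$ in the exponent. Once these are in place, the kernel estimates from (\ref{Collisional func}) and the Gronwall iteration are routine.
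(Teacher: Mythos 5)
Your proposal takes essentially the same route as the paper's own proof: a joint induction on the time level, positivity obtained by discarding the nonnegative birth term in (\ref{fully}) and controlling the death term through the kernel hypothesis (H2) together with the stability condition (\ref{22})--(\ref{23}), and the $L^1$ estimate obtained by summing (\ref{fully}) against $\Delta x_i$, discarding the death term, and iterating $1+a\le e^{a}$. Your two local refinements are sound and, if anything, cleaner than the paper's. First, you collapse the paper's repeated four-case analysis of (H2) into the single pointwise bound $K(x,y)\le\lambda(x+y)\le\lambda(2R+y)$ on $(0,R]^2$, which is exactly what the paper's cases amount to (Young's inequality on $(1,R)^2$, $x^{-\alpha}\le 1$ elsewhere), and it is compatible with the constant $C(R,T)$ of (\ref{23}). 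Second, in the birth term you use the exact telescoping $\sum_{i\le j}\int_{x_{i-1/2}}^{p_j^i}b\,dx=\int_0^{x_j}b\,dx\le\|b\|_{L^\infty}x_j$ before bounding $K_{j,l}\le 2\lambda R$, whereas the paper bounds each cell integral crudely by $\|b\|_{L^\infty}\Delta x_i$ and sums over $i$ to produce the factor $R$, then puts the weight $x_j+x_l$ back through the kernel (see the estimate following (\ref{equi1})); both roads give the same exponent $2\lambda R\|b\|_{L^\infty}M_1^{in}$.

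The one substantive issue is the point you flag and leave open: the discrete first-moment bound $M_1^n\le M_1^{in}$. This bound is load-bearing in both halves of the argument (it is what puts $M_1^{in}$ into $C(R,T)$ and into the exponent of (\ref{36})), and you are right that it is not automatic for this non-conservative scheme: multiplying (\ref{fully}) by $x_i\Delta x_i$ and summing, the birth term reproduces the death term only up to a midpoint-quadrature error, since $\sum_{i\le j}x_i\int_{x_{i-1/2}}^{p_j^i}b\,dx\le\int_0^{x_j}xb\,dx+\tfrac{h}{2}\int_0^{x_j}b\,dx\le x_j\bigl(1+\tfrac{h}{2}\|b\|_{L^\infty}\bigr)$, so one only obtains $M_1^{n+1}\le M_1^n\bigl(1+\lambda R\|b\|_{L^\infty}N^n h\Delta t\bigr)$, and a clean uniform bound requires folding this into the joint induction (giving $M_1^n\le M_1^{in}e^{O(h)}$, absorbable into the constants for small $h$) rather than the literal inequality $M_1^n\le M_1^{in}$. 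You should know, however, that the paper is in exactly the same position: in (\ref{non1}), in its case (2), and again in the $L^1$ estimate, it silently replaces $\sum_j x_jc_j^n\Delta x_j$ by $M_1^{in}$ with no justification whatsoever. So your proposal is no less complete than the published proof; by isolating this step as "the genuine obstacle" you have in fact identified the real gap in the paper's argument, and either proof needs the supplementary moment estimate sketched above to be fully rigorous.
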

\begin{proof}
Mathematical induction is used to demonstrate the non-negativity and equiboundedness of the function $c^h$. At t = 0, it is known that $c^h(0) \geq 0$ and belongs to $L^1(0,R)$. Assuming that the functions $c^h(t^n)\geq 0 $ and
 	\begin{align}\label{mon1}
 	\int_0^{R} c^h(t^n,x)dx \le  \|c^{in}\|_{L^1}\,e^{2\lambda R \|b\|_{L^{\infty}} M_{1}^{in} t^n}.
 	\end{align}
 	Then, our first goal is to demonstrate that $c^h(t^{n+1})\geq 0$. Consider the cell at the boundary with index i = 1. As a result, in this situation, we obtain from Eq.(\ref{fully}),
 	\begin{align}\label{non}
 	c_{1}^{n+1}=& c_{1}^{n}+\frac{\Delta t}{\Delta x_{1}}\sum_{l=1}^{\mathrm{I}}\sum_{j=1}^{\mathrm{I}}K_{j,l}c_{j}^{n}c_{l}^{n}\Delta x_{j}\Delta x_{l}\int_{x_{1/2}}^{p_{j}^{1}}b(x,x_{j},x_{l})\,dx  \nonumber\\
 	&-\Delta t \sum_{j=1}^{\mathrm{I}}K_{1,j}c_{1}^{n}c_{j}^{n}\Delta x_{j} \nonumber \\
 	&\geq c_{1}^{n}-\Delta t \sum_{j=1}^{\mathrm{I}}K_{1,j}c_{1}^{n}c_{j}^{n}\Delta x_{j}.
 	\end{align}
	Moving further, we choose the first case for collisional kernel, case-(1):
	 $ K(x,y)= \lambda(x^{\zeta}y^{\eta}+x^{\eta}y^{\zeta}),\, \text{when}  \,\, (x,y)\in (1,R)\times (1,R)$ 
	 \begin{align*}
	 c_{1}^{n+1}\geq  c_{1}^{n}-\Delta t \sum_{j=1}^{\mathrm{I}}\lambda(x_{1}^{\zeta}x_{j}^{\eta}+x_{1}^{\eta}x_{j}^{\zeta})c_{1}^{n}c_{j}^{n}\Delta x_{j},
	 \end{align*}
	 using the fact that $\lambda(x_{i}^{\zeta}x_{j}^{\eta}+x_{i}^{\eta}x_{j}^{\zeta})\leq \lambda(x_{i}+x_{j})$, thanks to Young's inequality, convert the above inequality into following one
	 \begin{align}\label{non1}
	 c_{1}^{n+1}&\geq  \,c_{1}^{n}-\lambda\Delta t \sum_{j=1}^{\mathrm{I}}(x_{1}+x_{j})c_{1}^{n}c_{j}^{n}\Delta x_{j} \nonumber \\
	 &\geq [1-\lambda\Delta t(R\sum_{j=1}^{\mathrm{I}}c_{j}^{n}\Delta x_{j}+ M_{1}^{in})]c_{1}^{n}.
	  \end{align} 	
	  Now, consider case-(2): $K(x,y)=\lambda x^{-\alpha}y, \, \text{when}\,\, (x,y)\in (1,R)\times (0,1)$ and put this value in Eq.(\ref{non}), then  imposing the condition $x^{-\alpha}\leq 1$  yields
	  \begin{align}
	   c_{1}^{n+1}&\geq  \,c_{1}^{n}-{\lambda}\Delta t \sum_{j=1}^{\mathrm{I}}x_{j}c_{1}^{n}c_{j}^{n}\Delta x_{j}\nonumber  \\ 
	   & \geq (1-\lambda\Delta t M_{1}^{in})c_{1}^{n}.
	  \end{align}
For	case-(3): $K(x,y)=\lambda xy^{-\alpha}, \, \text{when}\,\, (x,y)\in (0,1)\times (1,R)$ with  $y^{-\alpha}\leq 1$ and for case-(4): $K(x,y)=\lambda (xy), \, \text{when}\,\, (x,y)\in (0,1)\times (0,1)$ provide
\begin{align}
c_{1}^{n+1}\geq (1-\lambda\Delta t \sum_{j=1}^{\mathrm{I}}c_{j}^{n}\Delta x_{j})c_{1}^{n}.
\end{align}

All the results from case(1)-case(4) are collected and the following inequality is achieved
\begin{align}
c_{1}^{n+1}\geq [1-\lambda \Delta t(R\sum_{j=1}^{\mathrm{I}}c_{j}^{n}\Delta x_{j}+M_{1}^{in})]c_{1}^{n}.  
\end{align}
Using conditions (\ref{22}), (\ref{23}) and Eq.(\ref{mon1}), the non-negativity of $c_{1}^{n+1}$ is obtained. Thus, we assume that the computations for $i\geq 2$ goes similar to $i=1$ for all four cases and obtain the results like the previous ones. As a result, applying the stability condition  on the time step $\Delta t$ and the $L^1$ bound on $c^h$  yield $c^h({t^{n+1}})\geq 0.$\\

Following that, it is demonstrated that $c^h(t^{n+1})$ follows a similar estimation as (\ref{mon1}). To see this, multiply equation (\ref{fully}) by the term $\Delta x_i$, leaving the negative term out, and determine the result using summation with respect to $i$, as 
\begin{align}\label{equi1}
\sum_{i=1}^{\mathrm{I}}c_{i}^{n+1}\Delta x_{i} &\leq  \sum_{i=1}^{\mathrm{I}}c_{i}^{n}\Delta x_{i}+\Delta t\sum_{i=1}^{\mathrm{I}}\sum_{l=1}^{\mathrm{I}}\sum_{j=i}^{\mathrm{I}}K_{j,l}c_{j}^{n}c_{l}^{n}\Delta x_{j}\Delta x_{l}\int_{x_{i-1/2}}^{p_{j}^{i}}b(x,x_{j},x_{l})\,dx \nonumber \\
& \leq \sum_{i=1}^{\mathrm{I}}c_{i}^{n}\Delta x_{i}+
\Delta t\|b\|_{\infty}\sum_{i=1}^{\mathrm{I}}\sum_{l=1}^{\mathrm{I}}\sum_{j=1}^{\mathrm{I}}K_{j,l}c_{j}^{n}c_{l}^{n}\Delta x_{j}\Delta x_{l}\int_{x_{i-1/2}}^{x_{i+1/2}}\,dx \nonumber \\
& \leq \sum_{i=1}^{\mathrm{I}}c_{i}^{n}\Delta x_{i}+
\Delta t R \|b\|_{\infty}\sum_{l=1}^{\mathrm{I}}\sum_{j=1}^{\mathrm{I}}K_{j,l}c_{j}^{n}c_{l}^{n}\Delta x_{j}\Delta x_{l}.
\end{align}
Again, the above will be simplified for four cases of kernels:\\
Case-(1): $ K(x,y)= \lambda(x^{\zeta}y^{\eta}+x^{\eta}y^{\zeta}),\, \text{when}  \,\, (x,y)\in (1,R)\times (1,R)$. Substitute the value of $K(x,y)$ in Eq.(\ref{equi1}) and  using the Young's inequality leads to 
\begin{align*}
\sum_{i=1}^{\mathrm{I}}c_{i}^{n+1}\Delta x_{i} &\leq  \sum_{i=1}^{\mathrm{I}}c_{i}^{n}\Delta x_{i} + {\lambda}\Delta t R \|b\|_{\infty}\sum_{l=1}^{\mathrm{I}}\sum_{j=1}^{\mathrm{I}} (x_{j}+x_{l})c_{j}^{n}c_{l}^{n}\Delta x_{j}\Delta x_{l}\nonumber \\
& \leq (1+2{\lambda}\Delta t R \|b\|_{\infty}M_{1}^{in})\sum_{i=1}^{\mathrm{I}}c_{i}^{n}\Delta x_{i}.
\end{align*}
Finally, having (\ref{mon1}) the  $L^1$ bound of $c^h$ at time step n and  $1+x < \exp(x)$ $\forall$ $x>0$ imply that
$$\sum_{i=1}^{\mathrm{I}}c_{i}^{n+1}\Delta x_{i} \leq \|c^{in}\|_{L^1}\,e^{2\lambda R \|b\|_{L^{\infty}} M_{1}^{in} t^{n+1}}.$$
As a consequence, the result (\ref{36}) accomplished.\\
Case-(2): $K(x,y)=\lambda x^{-\alpha}y, \, \text{when}\,\, (x,y)\in (1,R)\times (0,1)$, and case-(3): $K(x,y)=\lambda xy^{-\alpha}, \, \text{when}\,\, (x,y)\in (0,1)\times (1,R)$ have  similar computations. The values of $K(x,y)$ after  substituting in Eq.(\ref{equi1}) yields
\begin{align*}
\sum_{i=1}^{\mathrm{I}}c_{i}^{n+1}\Delta x_{i} &\leq  \sum_{i=1}^{\mathrm{I}}c_{i}^{n}\Delta x_{i} + {\lambda}\Delta t R \|b\|_{\infty}\sum_{l=1}^{\mathrm{I}}\sum_{j=1}^{\mathrm{I}} (x_{j}^{-\alpha}x_{l})c_{j}^{n}c_{l}^{n}\Delta x_{j}\Delta x_{l}\\
&\leq \sum_{i=1}^{\mathrm{I}}c_{i}^{n}\Delta x_{i} + {\lambda}\Delta t R \|b\|_{\infty}\sum_{l=1}^{\mathrm{I}}\sum_{j=1}^{\mathrm{I}} x_{l}c_{j}^{n}c_{l}^{n}\Delta x_{j}\Delta x_{l}\\
& \leq (1+\lambda\Delta t R \|b\|_{\infty}M_{1}^{in})\sum_{i=1}^{\mathrm{I}}c_{i}^{n}\Delta x_{i}.
\end{align*}
Again, using (\ref{mon1}) and $1+x < \exp(x)$ $\forall$ $x>0$  provide the $L^1$ bound for $c^h$ at time step $n+1$.

Case-(4): For  $K(x,y)=\lambda (xy), \, \text{when}\,\, (x,y)\in (0,1)\times (0,1)$,  inserting the value of $K$ in Eq.(\ref{equi1}) employs
\begin{align*}
\sum_{i=1}^{\mathrm{I}}c_{i}^{n+1}\Delta x_{i} &\leq  \sum_{i=1}^{\mathrm{I}}c_{i}^{n}\Delta x_{i} + {\lambda}\Delta t R \|b\|_{\infty}\sum_{l=1}^{\mathrm{I}}\sum_{j=1}^{\mathrm{I}} x_{j}x_{l}c_{j}^{n}c_{l}^{n}\Delta x_{j}\Delta x_{l}\\
& \leq \sum_{i=1}^{\mathrm{I}}c_{i}^{n}\Delta x_{i} + {\lambda}\Delta t R \|b\|_{\infty}\sum_{l=1}^{\mathrm{I}}\sum_{j=1}^{\mathrm{I}} x_{l}c_{j}^{n}c_{l}^{n}\Delta x_{j}\Delta x_{l}.
\end{align*}
 To get the result (\ref{36}) for $c^h(t^{n+1}),$ the computations are similar to the previous case.
\end{proof}

To demonstrate the family of solutions's uniform integrability, let us designate a specific category of convex functions as $C_{V P,\infty}$. Consider $ \Phi \in C^{\infty}([0,\infty))$, a non-negative and convex function that belongs to the ${C}_{VP, \infty}$ class and has the following properties:

	\begin{description}
	\item[(i)] $\Phi(0)=0,\ \Phi'(0)=1$ and $\Phi'$ is concave;
	\item[(ii)] $\lim_{p \to \infty} \Phi'(p) =\lim_{p \to \infty} \frac{ \Phi(p)}{p}=\infty$;
	\item[(iii)] for $\theta \in (1, 2)$,
	\begin{align}\label{Tproperty}
	\Pi_{\theta}(\Phi):= \sup_{p \ge 0} \bigg\{   \frac{ \Phi(p)}{p^{\theta}} \bigg\} < \infty.
	\end{align}
	\end{description}
	It is given that, $c^{in}\in L^1\,(0,R)$, therefore, by De la Vall\'{e}e Poussin theorem, a convex function
	$\mathrm{\Phi}\geq 0$ exists which is  continuously differentiable on
	$\mathbb{R}^{+}$ with $\mathrm{\Phi}(0)=0$, $\mathrm{\Phi}'(0)=1$
	such that $\mathrm{\Phi}'$ is concave
	$$\frac{\mathrm{\Phi}(p)}{p} \rightarrow \infty,\ \ \text{as}\ \
	p \rightarrow \infty$$ and
	\begin{align}\label{convex}
	\mathcal{I}:=\int_0^{R} \mathrm{\Phi}(c^{in})(x)dx< \infty.
	\end{align}
	\begin{lem} [\cite{laurenccot2002continuous}, Lemma B.1.]\label{lemma}
	Let $\mathrm{\Phi}\in {C}_{VP, \infty}$. 
Then $\forall$  $(x,y)\in \mathbb{R}^{+}\times \mathbb{R}^{+},$
	$$x\mathrm{\Phi}'(y)\leq \mathrm{\Phi}(x)+\mathrm{\Phi}(y).$$
	\end{lem}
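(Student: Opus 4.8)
The plan is to derive the inequality from two ingredients: the tangent-line (subgradient) inequality coming from the convexity of $\Phi$, and an auxiliary one-variable estimate $y\Phi'(y)\le 2\Phi(y)$ that encodes the concavity hypothesis (i) on $\Phi'$. Fixing $(x,y)\in\mathbb{R}^{+}\times\mathbb{R}^{+}$, convexity and differentiability of $\Phi$ give the supporting-line bound $\Phi(x)\ge \Phi(y)+\Phi'(y)(x-y)$, which I would rewrite as $x\Phi'(y)\le \Phi(x)-\Phi(y)+y\Phi'(y)$. Hence it suffices to absorb the residual term $y\Phi'(y)$ into $2\Phi(y)$, i.e.\ to establish the inequality $y\Phi'(y)\le 2\Phi(y)$ for every $y\ge 0$.

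For that auxiliary estimate I would write $\Phi(y)=\int_0^y \Phi'(s)\,ds$ (using $\Phi(0)=0$) and exploit that $\Phi'$ is concave on $[0,y]$. Since the trapezoidal rule underestimates the integral of a concave function, one gets $\int_0^y \Phi'(s)\,ds \ge \tfrac{y}{2}\big(\Phi'(0)+\Phi'(y)\big)$. Using $\Phi'(0)=1\ge 0$ this yields $\Phi(y)\ge \tfrac{y}{2}\big(1+\Phi'(y)\big)\ge \tfrac{y}{2}\Phi'(y)$, that is, $y\Phi'(y)\le 2\Phi(y)$, as wanted. Combining the two steps then closes the argument: $x\Phi'(y)\le \Phi(x)-\Phi(y)+y\Phi'(y)\le \Phi(x)-\Phi(y)+2\Phi(y)=\Phi(x)+\Phi(y)$.

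The main obstacle — really the only nontrivial point — is the auxiliary bound $y\Phi'(y)\le 2\Phi(y)$, and this is precisely where the concavity of $\Phi'$ (property (i), equivalently the at-most-quadratic growth encoded in (iii)) must enter: for a function $\Phi$ growing faster than quadratically the inequality $y\Phi'(y)\le 2\Phi(y)$ fails and the lemma is false, so the hypothesis cannot be dropped. If one prefers to avoid the trapezoidal estimate, the same bound follows by a differential argument, since $\tfrac{d}{dy}\big(2\Phi(y)-y\Phi'(y)\big)=\Phi'(y)-y\Phi''(y)$, and the tangent-line characterization of the concave function $\Phi'$ (its graph lies below the tangent at $y$) gives $\Phi'(0)\le \Phi'(y)-y\Phi''(y)$, whence $\Phi'(y)-y\Phi''(y)\ge \Phi'(0)=1\ge 0$; integrating from $0$ (where $2\Phi(0)-0\cdot\Phi'(0)=0$) delivers $2\Phi(y)-y\Phi'(y)\ge 0$. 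Either route reaches the same estimate, after which the convexity inequality finishes the proof uniformly in all ranges of $x$ and $y$.
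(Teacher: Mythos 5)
Your proof is correct, and in fact the paper never proves this lemma at all: it is imported by citation from \cite{laurenccot2002continuous} (Lemma B.1), so your argument supplies a proof the paper leaves implicit. The two-step structure you use --- the tangent-line inequality $\Phi(x)\ge \Phi(y)+\Phi'(y)(x-y)$ from convexity, combined with the auxiliary estimate $y\Phi'(y)\le 2\Phi(y)$ extracted from the concavity of $\Phi'$ and $\Phi'(0)=1$ --- is precisely the standard argument behind the cited result, and both of your routes to the auxiliary bound (the trapezoidal comparison $\int_0^y\Phi'(s)\,ds\ge \tfrac{y}{2}\bigl(\Phi'(0)+\Phi'(y)\bigr)$ for the concave integrand $\Phi'$, or the monotonicity of $y\mapsto 2\Phi(y)-y\Phi'(y)$) are legitimate under the paper's definition of $C_{VP,\infty}$, which grants $\Phi\in C^\infty$, $\Phi(0)=0$, $\Phi'(0)=1$, $\Phi$ convex and $\Phi'$ concave. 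One small caveat: your parenthetical assertion that property (i) (concavity of $\Phi'$) is ``equivalent'' to the sub-quadratic growth condition (iii) is not literally correct --- growth of order $p^{\theta}$ with $\theta<2$ does not by itself force $\Phi'$ to be concave --- but this is only an aside; the logical chain of your proof uses (i) alone and is complete as written.
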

	
	The equiintegrability is now examined in the following statement.
	\begin{propos}\label{equiintegrability}
							Let $c^{in}\geq 0\in L^1 (0,R)$ and (\ref{fully}) constructs the family $(c^h)$ for any $h$ and $\Delta t$, where $\Delta t$ fulfills the relation (\ref{22}). Then $(c^h)$ is weakly relatively sequentially compact in $L^1((0,T)\times (0,R))$.
						\end{propos}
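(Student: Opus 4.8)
The plan is to deduce the claim from the Dunford--Pettis criterion stated in Theorem \ref{maintheorem1}. The equiboundedness requirement (\ref{equiboundedness}) has already been verified by the preceding proposition through the $L^1$ estimate (\ref{36}), so the whole task reduces to establishing the equiintegrability (\ref{equiintegrable}). Since $c^{in}\in L^1(0,R)$, the De la Vall\'ee Poussin theorem supplies a function $\Phi\in C_{VP,\infty}$ with $\Phi(0)=0$, $\Phi'(0)=1$, $\Phi'$ concave and $\Phi(p)/p\to\infty$ as $p\to\infty$, for which $\mathcal{I}=\int_0^R\Phi(c^{in})(x)\,dx<\infty$ as in (\ref{convex}). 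Recalling the piecewise-constant structure (\ref{chap2:function_ch}) of $c^h$, it is enough to prove that $\sum_{i=1}^{\mathrm{I}}\Phi(c_i^n)\Delta x_i$ is bounded by a constant independent of $n$, $h$ and $\Delta t$, since this yields a uniform bound on $\int_0^R\Phi(c^h(t,x))\,dx$ on $[0,T]$.

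The mechanism I would use is to track the evolution of $\mathcal{E}^n:=\sum_{i=1}^{\mathrm{I}}\Phi(c_i^n)\Delta x_i$ along the scheme (\ref{fully}). Writing $c_i^{n+1}=c_i^n+B_i-D_i$ with $B_i\geq0$ the gain term and $D_i\geq0$ the loss term, the non-negativity established earlier together with the monotonicity of $\Phi$ gives $\Phi(c_i^{n+1})\leq\Phi(c_i^n+B_i)$. A first-order Taylor expansion using the convexity of $\Phi$, combined with the subadditivity of the concave derivative $\Phi'$ (which holds because $\Phi'(0)=1\geq0$), then bounds the increment $\Phi(c_i^{n+1})-\Phi(c_i^n)$ by a main term $B_i\,\Phi'(c_i^n)$ plus a remainder of the form $B_i\,\Phi'(B_i)$; by property (iii) in (\ref{Tproperty}) the latter is dominated by $\Phi(B_i)$ and, since $B_i=O(\Delta t)$, its contribution after summation in $i$ and $n$ is of higher order in $\Delta t$ and remains harmless.

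The core of the argument, and the step I expect to be the main obstacle, is the estimate of the main term $\Delta t^{-1}\sum_i B_i\,\Phi'(c_i^n)\Delta x_i$. This is a cubic expression in the densities weighted by $\Phi'$, carrying the full double sum, the truncated breakage integral $\int_{x_{i-1/2}}^{p_j^i}b\,dx$ and the collision weights $K_{j,l}$. I would bound $b$ by $\|b\|_{L^\infty}$, dispatch $K_{j,l}$ by Young's inequality through the four cases of (\ref{Collisional func}) exactly as in the previous proposition (so that $K_{j,l}\lesssim\lambda(x_j+x_l)$ and its variants), and then invoke Lemma \ref{lemma} in the form $c_l^n\,\Phi'(c_i^n)\leq\Phi(c_l^n)+\Phi(c_i^n)$ to peel off one density factor. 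Summing the leftover factor against the mass and first-moment bounds provided by (\ref{36}) and $M_1^{in}$, together with the index bookkeeping $j\ge i$ and $\sum_{i\le j}\Delta x_i\le x_{j+1/2}\le R$, collapses the triple sum into a bound of the form $C\big(\mathcal{E}^n+1\big)$; matching the constant here with $C(R,T)$ in (\ref{23}) is what makes the stability condition (\ref{22}) the natural hypothesis.

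Putting these together yields a discrete Gronwall inequality $\mathcal{E}^{n+1}\leq(1+C\Delta t)\mathcal{E}^n+o(\Delta t)$, whence $\mathcal{E}^n\leq e^{CT}\big(\mathcal{I}+1\big)$ uniformly in $n$, $h$ and $\Delta t$. Consequently $\int_0^R\Phi(c^h(t,x))\,dx$ is uniformly bounded on $[0,T]$, so $(c^h)$ is equiintegrable; combined with the equiboundedness from (\ref{36}) and the superlinear growth $\Phi(p)/p\to\infty$, Theorem \ref{maintheorem1} delivers the weak relative sequential compactness of $(c^h)$ in $L^1((0,T)\times(0,R))$. The delicate points to watch are the direction of the convexity estimate (the loss term must be discarded before applying monotonicity of $\Phi$) and the verification that the remainder $\sum_{i}\Phi(B_i)\Delta x_i$ is genuinely higher order, for which property (iii) with $\theta\in(1,2)$ is essential.
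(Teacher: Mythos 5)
Your proposal is correct and follows the same overall strategy as the paper's proof: reduction to equiintegrability via Theorem \ref{maintheorem1}, choice of $\Phi\in C_{VP,\infty}$ from the De la Vall\'ee Poussin theorem, control of $\mathcal{E}^n:=\sum_{i}\Phi(c_i^n)\Delta x_i$ along the scheme after discarding the death term, the same four-case Young's inequality treatment of $K$, and closure by a discrete Gronwall iteration, Jensen's inequality and Dunford--Pettis. You differ in two tactical points, both legitimate. First, the paper applies convexity at the right endpoint, $\Phi(c_i^{n+1})-\Phi(c_i^n)\le(c_i^{n+1}-c_i^n)\Phi'(c_i^{n+1})$, which produces $\Phi'(c_i^{n+1})$ and hence an \emph{implicit} recursion: the term $C\Delta t\sum_i\Delta x_i\Phi(c_i^{n+1})$ must be moved to the left-hand side, so the iteration constant becomes $A=(1-C\Delta t)^{-1}$ and the smallness of $\Delta t$ is needed to keep the denominator positive. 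Your expansion around $c_i^n$ (monotonicity to drop $D_i$, then subadditivity of the concave $\Phi'$) yields the \emph{explicit} recursion $\mathcal{E}^{n+1}\le(1+C\Delta t)\mathcal{E}^n+O(\Delta t^{\theta})$, at the price of the extra remainder $B_i\Phi'(B_i)$; this is arguably cleaner since no inversion of $(1-C\Delta t)$ is required. Second, the paper applies Lemma \ref{lemma} to the pair $\left(b(x_i,x_j,x_l),\,c_i^{n+1}\right)$, so that property (iii) of (\ref{Tproperty}) is spent on $\Phi(b)$, whereas you apply it to $\left(c_l^n,\,c_i^n\right)$ and keep $b$ in sup-norm, reserving property (iii) for the remainder $\Phi(B_i)$; both pairings collapse the triple sum using the mass bound (\ref{36}) and the first-moment bound. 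Two small points to tighten: the estimate $B_i\Phi'(B_i)\lesssim\Phi(B_i)$ is not a consequence of property (iii) alone, as you assert --- it follows from concavity of $\Phi'$ with $\Phi'(0)\ge 0$ (which gives $p\,\Phi'(p)\le 2\Phi(p)$), or from (iii) combined with convexity; property (iii) is then what converts $\Phi(B_i)$ into $\Pi_{\theta}(\Phi)B_i^{\theta}=O(\Delta t^{\theta})$. Also, the initialization $\mathcal{E}^0\le\mathcal{I}$ requires Jensen's inequality applied to the cell averages $c_i^{in}$, which you use implicitly; the paper invokes it explicitly in (\ref{Equi5}).
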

						
	\begin{proof}
	The objective here is to get a result comparable to (\ref{convex}) for the function family $c^h$.  Using the sequence $c^{n}_{i}$, the integral of $(c^h)$ may be expressed as
		\begin{align*}
		\int_0^T \int_0^{R} \mathrm{\Phi}(c^h(t,x))dx\,dt=&\sum_{n=0}^{N-1}\sum_{i=1}^{\mathrm{I}}\int_{\tau_n}\int_{\Lambda_i^h}\mathrm{\Phi}
		\bigg(\sum_{k=0}^{N-1}\sum_{j=1}^{\mathrm{I}}c_j^k\chi_{\Lambda_j^h}(x)\chi_{\tau_k}(t)\bigg)dx\,dt\\
		=&\sum_{n=0}^{N-1}\sum_{i=1}^{\mathrm{I}}\Delta t\Delta x_i\mathrm{\Phi}(c_i^n).
		\end{align*}
	It follows from the discrete Eq.(\ref{fully}), as well as the convexity of the function $\mathrm{\Phi}$  and  ${\mathrm{\Phi}}^{'}\geq 0$, that
	\begin{align}\label{Equi1}
\sum_{i=1}^{\mathrm{I}} [\mathrm{\Phi}(c_i^{n+1})-\mathrm{\Phi}(c_i^{n})]\Delta x_{i}& \leq \sum_{i=1}^{\mathrm{I}}\left(c_i^{n+1}-c_i^{n}\right)\mathrm{\Phi}^{'}(c_i^{n+1})\Delta x_{i} \nonumber\\
& \leq \Delta t\sum_{i=1}^{\mathrm{I}}\sum_{l=1}^{\mathrm{I}}\sum_{j=i}^{\mathrm{I}}K_{j,l}c_{j}^{n}c_{l}^{n}\mathrm{\Phi}^{'}(c_i^{n+1})\Delta x_{j}\Delta x_{l}\int_{x_{i-1/2}}^{x_{i+1/2}}b(x,x_{j},x_{l})\,dx \nonumber \\
&  \leq \Delta t\sum_{i=1}^{\mathrm{I}}\sum_{l=1}^{\mathrm{I}}\sum_{j=1}^{\mathrm{I}}K_{j,l}c_{j}^{n}c_{l}^{n}\mathrm{\Phi}^{'}(c_i^{n+1})\Delta x_{j}\Delta x_{l} b(x_{i},x_{j},x_{l})\Delta x_{i}.
	\end{align}
Case-(1): $ K(x,y)= \lambda(x^{\zeta}y^{\eta}+x^{\eta}y^{\zeta}),\, \text{when}  \,\, (x,y)\in (1,R)\times (1,R)$. Substitute the value of $K(x,y)$ in Eq.(\ref{Equi1}) yields
\begin{align*}
\sum_{i=1}^{\mathrm{I}} [\mathrm{\Phi}(c_i^{n+1})-\mathrm{\Phi}(c_i^{n})]\Delta x_{i} \leq &  {\lambda} \Delta t\sum_{i=1}^{\mathrm{I}}\sum_{l=1}^{\mathrm{I}}\sum_{j=1}^{\mathrm{I}}(x_{j}+x_{l})c_{j}^{n}\Delta x_{j}c_{l}^{n}\Delta x_{l}\Delta x_{i} b(x_{i},x_{j},x_{l})\mathrm{\Phi}^{'}(c_i^{n+1}).
\end{align*}
The convexity result in Lemma {\ref{lemma}} allows us to obtain
\begin{align}\label{Equi2}
\sum_{i=1}^{\mathrm{I}} [\mathrm{\Phi}(c_i^{n+1})-\mathrm{\Phi}(c_i^{n})]\Delta x_{i} & \leq  2 {\lambda} \Delta t\sum_{i=1}^{\mathrm{I}}\sum_{l=1}^{\mathrm{I}}\sum_{j=1}^{\mathrm{I}}x_{j}c_{j}^{n}\Delta x_{j}c_{l}^{n}\Delta x_{l}\Delta x_{i}[ \mathrm{\Phi}(c_i^{n+1})+{\mathrm{\Phi}}(b(x_{i},x_{j},x_{l}))] \nonumber \\
& \leq 2 {\lambda} \Delta t\sum_{i=1}^{\mathrm{I}}\sum_{l=1}^{\mathrm{I}}\sum_{j=1}^{\mathrm{I}}x_{j}c_{j}^{n}\Delta x_{j}c_{l}^{n}\Delta x_{l}\Delta x_{i}\mathrm{\Phi}(c_i^{n+1})\nonumber\\
& +  2 {\lambda} \Delta t\sum_{i=1}^{\mathrm{I}}\sum_{l=1}^{\mathrm{I}}\sum_{j=1}^{\mathrm{I}}x_{j}c_{j}^{n}\Delta x_{j}c_{l}^{n}\Delta x_{l}\Delta x_{i}{\mathrm{\Phi}}(b(x_{i},x_{j},x_{l})).
\end{align}
After employing the Eq.(\ref{Tproperty}) and Eq.(\ref{36}) into the second term on right-hand side of the above equation leads to
\begin{align}\label{Equi3}
 2 {\lambda} \Delta t\sum_{i=1}^{\mathrm{I}}\sum_{l=1}^{\mathrm{I}}\sum_{j=1}^{\mathrm{I}}x_{j}c_{j}^{n}\Delta x_{j}c_{l}^{n}\Delta x_{l}\Delta x_{i}{\mathrm{\Phi}}(b(x_{i},x_{j},x_{l})) \nonumber\\
 =  2 {\lambda} \Delta t\sum_{i=1}^{\mathrm{I}}\sum_{l=1}^{\mathrm{I}}\sum_{j=1}^{\mathrm{I}}x_{j}c_{j}^{n}\Delta x_{j}c_{l}^{n}&\Delta x_{l}\Delta x_{i}\frac{{\mathrm{\Phi}}(b(x_{i},x_{j},x_{l}))}{\{b(x_{i},x_{j},x_{l})\}^{\theta}}{b(x_{i},x_{j},x_{l})}^{\theta}\nonumber\\
 \leq 2{\lambda} \Delta t R \Pi_{\theta}(\Phi)M_{1}^{in}{\|b\|}^{\theta}_{\infty}\sum_{l=1}^{\mathrm{I}}&c_{l}^{n}\Delta x_{l}\nonumber \\
       \leq 2\lambda  \Delta t R \Pi_{\theta}(\Phi)M_{1}^{in}{\|b\|}^{\theta}_{\infty} &\|c^{in}\|_{L^1}\,e^{2\lambda R \|b\|_{L^{\infty}} M_{1}^{in} T}.
\end{align}
Now, Eq.(\ref{Equi2}) and Eq.(\ref{Equi3}) imply that
\begin{align*}
\sum_{i=1}^{\mathrm{I}} [\mathrm{\Phi}(c_i^{n+1})-\mathrm{\Phi}(c_i^{n})]\Delta x_{i} \leq & 2\lambda \Delta t M_{1}^{in}\|c^{in}\|_{L^1}\,e^{2\lambda R \|b\|_{L^{\infty}} M_{1}^{in} T}\sum_{i=1}^{\mathrm{I}}\Delta x_{i}\mathrm{\Phi}(c_i^{n+1})\nonumber \\
& + 2\lambda  \Delta t R \Pi_{\theta}(\Phi)M_{1}^{in}{\|b\|}^{\theta}_{\infty} \|c^{in}\|_{L^1}\,e^{2\lambda R \|b\|_{L^{\infty}} M_{1}^{in} T}.
\end{align*}
It can be easily simplified as
\begin{align*}
(1-2\lambda \Delta t M_{1}^{in}\|c^{in}\|_{L^1}\,e^{2\lambda R \|b\|_{L^{\infty}} M_{1}^{in} T})\sum_{i=1}^{\mathrm{I}}\Delta x_{i}\mathrm{\Phi}(c_i^{n+1}) \leq & \sum_{i=1}^{\mathrm{I}}\Delta x_{i}\mathrm{\Phi}(c_i^{n}) \nonumber \\
& + 2\lambda \Delta t R \Pi_{\theta}(\Phi)M_{1}^{in}{\|b\|}^{\theta}_{\infty} \|c^{in}\|_{L^1}\,e^{2\lambda R \|b\|_{L^{\infty}} M_{1}^{in} T}. 
\end{align*}
 The above inequality implies that 
\begin{align*}
\sum_{i=1}^{\mathrm{I}}   \Delta x_i \mathrm{\Phi}(c_i^{n+1}) \le A \sum_{i=1}^{\mathrm{I}}   \Delta x_i \mathrm{\Phi}(c_i^{n})+  B,
\end{align*} 
where 
$$  A= \frac{1}{(1- 2 \lambda \Delta t M_{1}^{in}\|c^{in}\|_{L^1}\,e^{2\lambda R \|b\|_{L^{\infty}} M_{1}^{in} T})},  \,\, 
B = \frac{2\lambda \Delta t R \Pi_{\theta}(\Phi)M_{1}^{in}{\|b\|}^{\theta}_{\infty} \|c^{in}\|_{L^1}\,e^{2\lambda R \|b\|_{L^{\infty}} M_{1}^{in} T}}{(1-2\lambda \Delta t M_{1}^{in}\|c^{in}\|_{L^1}\,e^{2\lambda R \|b\|_{L^{\infty}} M_{1}^{in} T})}.
$$
Therefore,
	\begin{align}\label{Equi4}
	\sum_{i=1}^{\mathrm{I}}   \Delta x_i \mathrm{\Phi}(c_i^{n}) \le A^{n} \sum_{i=1}^{\mathrm{I}}   \Delta x_i \mathrm{\Phi}(c_i^{in})+  B \frac{A^{n} -1}{A-1}.
	\end{align}
	Thanks to Jensen's inequality and having (\ref{convex}), we obtain
		\begin{align}\label{Equi5}
	\int_0^{\mathbb{R}}  \mathrm{\Phi}(c^{h}(t, x))\,dx \le & A^{n} \sum_{i=1}^{\mathrm{I}(h)}   \Delta x_i \mathrm{\Phi} \bigg( \frac{1}{\Delta x_i } \int_{\Lambda_i^h}c^{in}(x) dx \bigg)+  B \frac{A^{n} -1}{A-1}\nonumber\\
	\le & A^{n}\mathcal{I}+  B \frac{A^{n} -1}{A-1} < \infty, \quad \text{for all}\ \ \ t\in [0,T].
	\end{align}
	The computations for Case-(2), Case-(3) and Case-(4) is equavilent to the Case-(1). Only just, we got the different values of A and B, which are the following
	$$  A= \frac{1}{(1-  \lambda \Delta t M_{1}^{in}\|c^{in}\|_{L^1}\,e^{2\lambda R \|b\|_{L^{\infty}} M_{1}^{in} T})},  \,\, 
	B = \frac{\lambda \Delta t R \Pi_{\theta}(\Phi)M_{1}^{in}{\|b\|}^{\theta}_{\infty} \|c^{in}\|_{L^1}\,e^{2\lambda R \|b\|_{L^{\infty}} M_{1}^{in} T}}{(1-\lambda \Delta t M_{1}^{in}\|c^{in}\|_{L^1}\,e^{2\lambda R \|b\|_{L^{\infty}} M_{1}^{in} T})}.
	$$

	 Thus, the sequence $(c^h)$ is said to be weakly compact in $L^1$ by applying the Dunford-Pettis theorem. At the same moment, it is equally bounded with regard to $h$ and $t$, and condition (\ref{36}) is achieved, ensuring the existence of a subsequence of $(c^h)$ that converges weakly to $c \in L^1((0, T)\times (0,R))$ as $h \rightarrow 0$.
	\end{proof}	
	
	The moment has arrived to demonstrate the weak convergence of the sequence $c^n_{i}$, which is formed by a succession of step functions $c^h$. To do this, various point approximations are utilized, which are as seen below. \\
	Midpoint approximation:	
	\begin{align*}
		X^h:x\in (0,R)\rightarrow
		X^h(x)=\sum_{i=1}^{\mathrm{I}}x_i\chi_{\Lambda_i^h}(x).
		\end{align*}
		 Right endpoint approximation:
		\begin{align*}
		\Xi^h:x\in (0,R)\rightarrow
		\Xi^h(x)=\sum_{i=1}^{\mathrm{I}}x_{i+1/2}\chi_{\Lambda_i^h}(x).
		\end{align*}
		Left endpoint approximation:
		\begin{align*}
		\xi^h:x\in (0,R)\rightarrow
		\xi^h(x)=\sum_{i=1}^{\mathrm{I}}x_{i-1/2}\chi_{\Lambda_i^h}(x).
		\end{align*}
		The following lemma is a valuable tool for the convergence.	
\begin{lem}{\label{Wconverge}} [\cite{laurenccot2002continuous}, Lemma A.2]
Let $\Pi$ be an open subset of $\mathbb{R}^m$ and let there exists a constant $l>0$ and two sequences $(z^1_n)_{n\in \mathbb{N}}$ and
$(z^2_n)_{n\in \mathbb{N}}$ such that $(z^1_n)\in L^1(\Pi), z^1\in L^1(\Pi)$ and $$z^1_n\rightharpoonup z^1,\ \ \ \text{weakly in}\
\,L^1(\Pi)\ \text{as}\ n\rightarrow \infty,$$ $(z^2_n)\in
L^\infty(\Pi), z^2 \in L^\infty(\Pi),$ and for all $n\in
\mathbb{N}, |z^2_n|\leq l$ with $$z^2_n\rightarrow z^2,\ \ \text{almost
everywhere (a.e.) in}\ \ \Pi  \ \text{as}\ \ n\rightarrow
\infty.$$ Then
$$\lim_{n\rightarrow \infty}\|z^1_n(z^2_n-z^2)\|_{L^1(\Pi)}=0$$
and $$z^1_n\, z^2_n\rightharpoonup z^1\, z^2,\ \ \ \text{weakly in}\
\,L^1(\Pi)\ \text{as}\ n\rightarrow \infty.$$
\end{lem}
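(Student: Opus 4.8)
The plan is to establish the two conclusions in order, noting that the strong convergence statement (the first assertion) is the substantive part, after which the weak convergence of the products follows by a short decomposition argument. The only analytic inputs I would use are the Dunford--Pettis theorem and Egorov's theorem, together with the uniform bound $|z^2_n - z^2| \le 2l$ coming from the hypothesis $|z^2_n| \le l$ (and its consequence $|z^2| \le l$, which follows from the a.e.\ convergence).

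First I would extract uniform integrability from the weak convergence of $(z^1_n)$. Since $z^1_n \rightharpoonup z^1$ in $L^1(\Pi)$, the sequence is relatively weakly compact, so by the Dunford--Pettis theorem it is bounded, say $\sup_n \|z^1_n\|_{L^1(\Pi)} =: M < \infty$, and uniformly integrable: for every $\varepsilon > 0$ there is $\delta > 0$ with $\int_E |z^1_n|\,dx < \varepsilon$ for all $n$ whenever $|E| < \delta$. When $|\Pi| = \infty$ one also records the accompanying tightness, namely a set $K \subset \Pi$ of finite measure with $\int_{\Pi \setminus K} |z^1_n|\,dx < \varepsilon$ for all $n$.

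To prove the first assertion, I would fix $\varepsilon > 0$ and split $\Pi = (\Pi \setminus K) \cup (K \setminus A) \cup A$, where $K$ is the tightness set above (take $K = \Pi$ if $|\Pi| < \infty$) and $A \subset K$ is supplied by Egorov's theorem applied on the finite-measure set $K$: $z^2_n \to z^2$ uniformly on $A$ with $|K \setminus A| < \delta$. On $\Pi \setminus K$ and on $K \setminus A$ the integrand is bounded by $2l\,|z^1_n|$, so the contributions are at most $2l\varepsilon$ each, by tightness and by uniform integrability respectively; on $A$ one bounds $\int_A |z^1_n|\,|z^2_n - z^2|\,dx \le M \sup_A |z^2_n - z^2|$, which tends to $0$ by uniform convergence. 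Hence $\limsup_n \int_\Pi |z^1_n|\,|z^2_n - z^2|\,dx \le 4l\varepsilon$, and letting $\varepsilon \to 0$ yields $\|z^1_n(z^2_n - z^2)\|_{L^1(\Pi)} \to 0$.

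For the second assertion I would write $z^1_n z^2_n = z^1_n(z^2_n - z^2) + z^1_n z^2$. The first summand converges to $0$ in $L^1(\Pi)$, hence weakly, by the first assertion. For the second, given any test function $\psi \in L^\infty(\Pi)$ the product $z^2 \psi$ again lies in $L^\infty(\Pi)$, so $\int_\Pi z^1_n z^2 \psi\,dx = \int_\Pi z^1_n (z^2 \psi)\,dx \to \int_\Pi z^1 (z^2 \psi)\,dx$ by the weak convergence $z^1_n \rightharpoonup z^1$; thus $z^1_n z^2 \rightharpoonup z^1 z^2$ weakly in $L^1(\Pi)$. Adding the two contributions gives $z^1_n z^2_n \rightharpoonup z^1 z^2$. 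The hard part will be the first assertion, specifically the simultaneous management of the small-$z^1_n$-mass set and the non-uniform-convergence set: this is exactly where Egorov (to localize uniform convergence to $A$) must be paired with uniform integrability and tightness (to absorb the complementary pieces), and where one must take care when $\Pi$ has infinite measure, since Egorov alone does not apply there.
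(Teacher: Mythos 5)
Your proposal cannot be compared against an in-paper argument, because the paper does not prove this lemma at all: it is imported verbatim, with citation, from the appendix of the reference \cite{laurenccot2002continuous} (Lemma A.2), and is then used as a black box in the convergence proof of Theorem \ref{maintheorem}. Judged on its own merits, your proof is correct and is the standard argument for this result. The key inputs are used properly: weak convergence in $L^1$ gives relative weak compactness, hence (Dunford--Pettis) boundedness, uniform integrability, and --- the point you rightly emphasize --- tightness, which is genuinely needed since $\Pi$ is only assumed open and may have infinite measure, so Egorov cannot be applied on all of $\Pi$. The three-set decomposition $\Pi = (\Pi\setminus K)\cup(K\setminus A)\cup A$ with the uniform bound $|z^2_n - z^2|\le 2l$ (noting $|z^2|\le l$ a.e.\ follows from the a.e.\ convergence) cleanly yields $\limsup_n \|z^1_n(z^2_n-z^2)\|_{L^1}\le 4l\varepsilon$, and the second assertion follows from the first by the decomposition $z^1_n z^2_n = z^1_n(z^2_n-z^2) + z^1_n z^2$, testing the second summand against $z^2\psi\in L^\infty(\Pi)$. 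One cosmetic remark: in the paper's actual application $\Pi = (0,T)\times(0,R)$ has finite measure, so the tightness step is vacuous there ($K=\Pi$), but including it makes your proof valid at the stated level of generality, which is the honest thing to do.
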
\enter		
We have now gathered all  the evidences needed to support Theorem \ref{maintheorem}. To demonstrate this, take a test function $\varphi\in C^1([0,T]\times ]0,R])$ with compact support with respect to $t$ in $[0,t_{N-1}]$ for small $t$. Establish the finite volume for time variable and left endpoint  approximation for space variable of $\varphi$ on $\tau_n\times \Lambda_i^h$ by
$$\varphi_i^n=\frac{1}{\Delta t}\int_{t_n}^{t_{n+1}}\varphi(t,x_{i-1/2})dt.$$  

Multiplying (\ref{fully}) by $\varphi_i^n$ and summing over $n\in \{0,...,N-1\}$ as well as $i\in \{1,...,\mathrm{I}\}$ yield
	\begin{align}\label{convergence1}
	\sum_{n=0}^{N-1}\sum_{i=1}^{\mathrm{I}}\Delta x_{i} (c_{i}^{n+1}-c_{i}^{n})\varphi_i^n = &{\Delta t}\sum_{n=0}^{N-1}\sum_{i=1}^{\mathrm{I}}\sum_{l=1}^{\mathrm{I}}\sum_{j=i}^{\mathrm{I}}K_{j,l}c_{j}^{n}c_{l}^{n}\Delta x_{j} \Delta x_{l}\varphi_i^n \int_{x_{i-1/2}}^{p_{j}^{i}}b(x,x_{j},x_{l})dx\nonumber \\
	&-{\Delta t}\sum_{n=0}^{N-1}\sum_{i=1}^{\mathrm{I}}\sum_{j=1}^{\mathrm{I}}K_{i,j}c_{i}^{n}c_{j}^{n}\Delta x_{i}\Delta x_{j}\varphi_i^n. 		
					\end{align}
When the summation for $n$ is separated, the left-hand side (LHS) resembles like this

	\begin{align*}
	\sum_{n=0}^{N-1}\sum_{i=1}^{\mathrm{I}}\Delta x_{i} (c_{i}^{n+1}-c_{i}^{n})\varphi_i^n = 	\sum_{n=0}^{N-1}\sum_{i=1}^{\mathrm{I}}\Delta	x_i  c_i^{n+1}(\varphi_i^{n+1}-\varphi_i^n)+ \sum_{i=1}^{\mathrm{I}}\Delta
						x_i  c_i^{in}\varphi_i^0.
	\end{align*}
Furthermore, considering the latter equation in terms of the function $c^h$ produces	
	  	
	\begin{align*}
	\sum_{n=0}^{N-1}\sum_{i=1}^{\mathrm{I}}\Delta x_{i} (c_{i}^{n+1}-c_{i}^{n})\varphi_i^n= &\sum_{n=0}^{N-1}\sum_{i=1}^{\mathrm{I}}\int_{\tau_{n+1}}\int_{\Lambda_i^h}c^h(t,x)\frac{\varphi(t,\xi^h(x))-\varphi(t-\Delta t,\xi^h(x))}{\Delta t}dx\,dt \\&+\sum_{i=1}^{\mathrm{I}}\int_{\Lambda_i^h}c^h(0,x)\frac{1}{\Delta t}\int_0^{\Delta t}\varphi(t,\xi^h(x))dt\,dx\\
= &\int_{\Delta t}^T \int_{0}^{R} c^h(t,x)\frac{\varphi(t,\xi^h(x))-\varphi(t-\Delta t,\xi^h(x))}{\Delta t}dx\,dt\\
	& + \int_{0}^R c^h(0,x)\frac{1}{\Delta t}\int_0^{\Delta t}\varphi(t,\xi^h(x))dt\,dx.
	\end{align*}	  
	Since, $\varphi\in C^1([0,T]\times ]0,R])$ posseses compact support and having bounded derivative, $c^h(0,x)\rightarrow c^{in}$ in $L^1(0,R)$ will provide the following result with the help of Lemma \ref{Wconverge}					
	\begin{align}\label{finaltime1}
		\int_{0}^R c^h(0,x)\frac{1}{\Delta t}\int_0^{\Delta t}\varphi(t,\xi^h(x))dtdx\rightarrow \int_{0}^R c^{in}(x)\varphi(0,x)dx
	\end{align}
as max$\{h,\Delta t\}$ goes to $0$.
Now, applying Taylor series expansion of $\varphi$, Lemma \ref{Wconverge} and Proposition \ref{equiintegrability} ensure that for max$\{h,\Delta t\} \rightarrow 0$ 
\begin{align*}
\int_0^T\int_0^R c^h(t,x)\frac{\varphi(t,\xi^h(x))-\varphi(t-\Delta
t,\xi^h(x))}{\Delta t}dx\,dt \rightarrow \int_0^T\int_0^R c(t,x)&\frac{\partial \varphi}{\partial t}(t,x)dx\,dt.
\end{align*}
	Hence, we obtain
	\begin{align}\label{finaltime2}
	&\int_{\Delta t}^T \int_0^R
	\underbrace{c^h(t,x)\frac{\varphi(t,\xi^h(x))-\varphi(t-\Delta t,\xi^h(x))}{\Delta t}}_{c(\varphi)} dx\,dt \nonumber \\ 
	&= \int_0^T \int_0^R c(\varphi)\, dx\,dt -\int_0^{\Delta t} \int_0^R c(\varphi)\, dx\,dt  \rightarrow \int_0^T \int_0^R  c(t,x)\frac{\partial \varphi}{\partial t}(t,x)dx\,dt
	\end{align}
	as max$\{h,\Delta t\}\rightarrow 0$. \\ 
Now, the first term in the RHS of Eq.(\ref{convergence1})	is taken for observing the computation
\begin{align}\label{convergence2}
{\Delta t}\sum_{n=0}^{N-1}\sum_{i=1}^{\mathrm{I}}\sum_{l=1}^{\mathrm{I}}\sum_{j=i}^{\mathrm{I}}K_{j,l}c_{j}^{n}c_{l}^{n}\Delta x_{j} \Delta x_{l}\varphi_i^n \int_{x_{i-1/2}}^{p_{j}^{i}}b(x,x_{j},x_{l})dx \nonumber \\
={\Delta t}\sum_{n=0}^{N-1}\sum_{i=1}^{\mathrm{I}}\sum_{l=1}^{\mathrm{I}}K_{i,l}c_{i}^{n}c_{l}^{n}\Delta x_{i} \Delta x_{l}\varphi_i^n& \int_{x_{i-1/2}}^{x_{i}}b(x,x_{i},x_{l})dx \nonumber \\
 + \Delta t\sum_{n=0}^{N-1}\sum_{i=1}^{\mathrm{I}}\sum_{l=1}^{\mathrm{I}}\sum_{j=i+1}^{\mathrm{I}} K_{j,l}c_{j}^{n}c_{l}^{n}\Delta x_{j} &\Delta x_{l}\varphi_i^n \int_{x_{i-1/2}}^{x_{i+1/2}}b(x,x_{j},x_{l})dx.
\end{align}

The first term of the  Eq.(\ref{convergence2}) simplifies to
\begin{align}\label{convergence3}
\Delta t\sum_{n=0}^{N-1}\sum_{i=1}^{\mathrm{I}}\sum_{l=1}^{\mathrm{I}}K_{i,l}c_{i}^{n}c_{l}^{n}\Delta x_{i} \Delta x_{l}\varphi_i^n \int_{x_{i-1/2}}^{x_{i}}b(x,x_{i},x_{l})dx& \nonumber \\
=\sum_{n=0}^{N-1}\sum_{i=1}^{\mathrm{I}}\sum_{l=1}^{\mathrm{I}}\int_{\tau_{n}}\int_{\Lambda_{i}^{h}}\int_{\Lambda_{l}^{h}}K^h(x,z)c^h(t,x)c^h(t,z)\varphi(t,\xi^h(x))&\int_{\xi^{h}(x)}^{X^{h}(x)}b(r,X^{h}(x),X^{h}(z))dr\,dz\,dx\,dt\nonumber \\
= \int_{0}^{T}\int_{0}^{R}\int_{0}^{R}K^h(x,z)c^h(t,x)c^h(t,z)\varphi(t,\xi^h(x))\int_{\xi^{h}(x)}^{X^{h}(x)}&b(r,X^{h}(x),X^{h}(z))dr\,dz\,dx\,dt.
\end{align}
Next, the second term of Eq.(\ref{convergence2}) leads to
\begin{align}\label{convergence4}
\Delta t\sum_{n=0}^{N-1}\sum_{i=1}^{\mathrm{I}}\sum_{l=1}^{\mathrm{I}}\sum_{j=i+1}^{\mathrm{I}} K_{j,l}c_{j}^{n}c_{l}^{n}\Delta x_{j} \Delta x_{l}\varphi_i^n \int_{x_{i-1/2}}^{x_{i+1/2}}b(x,x_{j},x_{l})dx \nonumber \\
=\sum_{n=0}^{N-1}\sum_{i=1}^{\mathrm{I}}\sum_{l=1}^{\mathrm{I}}\sum_{j=i+1}^{\mathrm{I}}\int_{\tau_{n}}\int_{\Lambda_{i}^{h}}\int_{\Lambda_{l}^{h}}\int_{\Lambda_{j}^{h}}K^h(y,z)c^h(t,y)c^h(t,z)\varphi(t,\xi^h(x)) \nonumber \\ \frac{1}{\Delta x_{i}}\int_{\Lambda_{i}^{h}}b(r,X^{h}(y),X^{h}(z))dr\,dy\,dz\,dx\,dt \nonumber \\
= \int_{0}^{T}\int_{0}^{R}\int_{0}^{R}\int_{\Xi^{h}(x)}^{R} K^h(y,z)c^h(t,y)c^h(t,z)\varphi(t,\xi^h(x))b(X^{h}(x),X^{h}(y),X^{h}(z))dy\,dz\,dx\,dt.
\end{align}
Eqs.(\ref{convergence2})-(\ref{convergence4}), Lemma \ref{Wconverge} and Proposition \ref{equiintegrability} imply that as max$\{h,\Delta t\} \rightarrow 0$ 
\begin{align}\label{convergence5}
{\Delta t}\sum_{n=0}^{N-1}\sum_{i=1}^{\mathrm{I}}\sum_{l=1}^{\mathrm{I}}\sum_{j=i}^{\mathrm{I}}K_{j,l}c_{j}^{n}c_{l}^{n}\Delta x_{j} \Delta x_{l}\varphi_i^n \int_{x_{i-1/2}}^{p_{j}^{i}}b(x,x_{j},x_{l})dx \nonumber \\
\rightarrow \int_{0}^{T}\int_{0}^{R}\int_{0}^{R}\int_{x}^{R} K(y,z)c(t,y)c(t,z)\varphi(t,x)&b(x,y,z)dy\,dz\,dx\,dt.
\end{align}
Taking the second term on the RHS of Eq.(\ref{convergence1}) employs
	\begin{align}\label{convergence6}
	{\Delta t}\sum_{n=0}^{N-1}\sum_{i=1}^{\mathrm{I}}\sum_{j=1}^{\mathrm{I}}K_{i,j}c_{i}^{n}c_{j}^{n}\Delta x_{i}\Delta x_{j}\varphi_i^n	\nonumber\\=
	\sum_{n=0}^{N-1}\sum_{i=1}^{\mathrm{I}}&\sum_{j=1}^{\mathrm{I}}\int_{\tau_{n}}\int_{\Lambda_i^h}\int_{\Lambda_j^h}K^h(x,y)c^h(t,x)c^h(t,y)\varphi(t,\xi^h(x))dy\,dx\,dt \nonumber	\\
	\rightarrow \int_{0}^{T}&\int_{0}^{R}\int_{0}^{R}K(x,y)c(t,x)c(t,y)\varphi(t,x)dy\,dx\,dt
	\end{align}
	as  max$\{h,\Delta t\} \rightarrow 0$.
	Eqs.(\ref{convergence1})-(\ref{convergence6}) deliver the desired results for the weak convergence as presented in Eq.(\ref{convergence0}).

\section{Error Simulation }\label{Error}
In this section, the error estimation is explored for CBE, which is based on the idea of \cite{bourgade2008convergence}. Taking the uniform mesh is crucial for estimating the error component, i.e., $\Delta x_i=h$\,\,$\forall i\in \{1,2,..., \mathrm{I}\}$. The error estimate is achieved by providing a  estimations on the difference $c^h-c$, where $c^h$ is constructed using the numerical technique and $c$ represents the exact solution to the problem (\ref{maineq}).
By using the following Theorem, we can determine the error estimate by making some assumptions about the kernels and the initial datum.\\

	\begin{thm}\label{errorth1}
	Let the collisional and breakage kernels satisfy $K\in W^{1,\infty}_{loc}(\mathbb{R}^{+} \times \mathbb{R}^{+})$, $b\in W^{1,\infty}_{loc}(\mathbb{R}^{+} \times \mathbb{R}^{+}\times \mathbb{R}^{+})$ and  initial datum  $c^{in}\in W^{1,\infty}_{loc}(\mathbb{R}^{+})$.	Moreover, consider a  uniform volume mesh and  time step $\Delta t$  that satisfy the condition (\ref{22}). Then, the following error estimates
	\begin{align}\label{errorbound}
	\|c^h-c\|_{L^{\infty}(0,T;L^{1}(0, R))} \leq H(T, R)(h+\Delta t)
	\end{align}
	holds,	where $c$ is the weak solution to (\ref{maineq}). 
	\end{thm}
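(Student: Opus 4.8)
The plan is to compare the scheme against the cell-averages of the exact solution and to close the estimate by a discrete Gronwall argument. Let me write $\bar c_i^n = \frac{1}{\Delta x_i}\int_{\Lambda_i^h} c(t_n,x)\,dx$ for the cell-averages of the exact solution $c$, set $e_i^n = c_i^n - \bar c_i^n$, and define $E^n = \sum_{i=1}^{\mathrm{I}} |e_i^n|\,\Delta x_i$. Since the piecewise-constant reconstruction error $\|c(t_n,\cdot) - \bar c^n\|_{L^1(0,R)}$ is $O(h)$ by the assumed $W^{1,\infty}_{loc}$ regularity (which must be propagated to $c(t,\cdot)$ uniformly on $[0,T]$), and since the scheme's initial data coincide with the averages so that $E^0=0$, it suffices to bound $E^n$ uniformly in $n$ by $H(T,R)(h+\Delta t)$.

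First I would recast the exact equation in the same discrete shape as (\ref{fully}): integrating (\ref{trunceq}) over $\Lambda_i^h\times\tau_n$ produces an identity for $\bar c_i^{n+1}-\bar c_i^n$ whose right-hand side is the exact birth and death integrals. Subtracting (\ref{fully}) yields an evolution equation for the error of the form
$$e_i^{n+1} - e_i^n = \Delta t\,\big(\mathcal{B}_i^n - \mathcal{D}_i^n\big) + \Delta t\,\mathcal{C}_i^n,$$
where $\mathcal{B}_i^n,\mathcal{D}_i^n$ collect the differences of the discrete birth and death operators evaluated at $c^h$ versus at $\bar c$ (the \emph{stability} part), and $\mathcal{C}_i^n$ gathers the \emph{consistency} defects arising from (a) replacing $K,b$ by their cell-constant projections $K^h,b^h$, (b) the midpoint quadrature in $y,z$ and in the inner $b$-integral (with the diagonal term $j=i$, for which $p_j^i=x_i$, treated separately), and (c) the explicit Euler step in time.

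For the consistency part I would bound each defect using the $W^{1,\infty}_{loc}$ data: the kernel-projection errors are $O(h)$ since $\|K-K^h\|_\infty,\,\|b-b^h\|_\infty = O(h)$ for Lipschitz kernels on $(0,R]$; the midpoint errors are $O(h)$ from the Lipschitz regularity of the integrands; and the time error is $O(\Delta t)$ from boundedness of $\partial_t c$. Summing $|\mathcal{C}_i^n|\Delta x_i$ over $i$ with the mass bound (\ref{36}) gives $\sum_i |\mathcal{C}_i^n|\Delta x_i \le C(R,T)(h+\Delta t)$, uniformly in $n$. For the stability part, the operators are bilinear, so I linearize via
$$c_i^n c_j^n - \bar c_i^n \bar c_j^n = (c_i^n - \bar c_i^n)c_j^n + \bar c_i^n (c_j^n - \bar c_j^n),$$
and the analogous splitting for the birth term. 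Using (\ref{36}) on both $c^h$ and $\bar c$, together with the boundedness of $K_{j,l}$ and $\|b\|_\infty$ on the truncated domain, gives $\sum_i |\mathcal{B}_i^n - \mathcal{D}_i^n|\Delta x_i \le C(R,T)\,E^n$.

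Multiplying the error equation by $\Delta x_i$, summing in $i$, and combining the two parts then yields the recursion
$$E^{n+1} \le \big(1 + C(R,T)\Delta t\big)\,E^n + C(R,T)\,\Delta t\,(h+\Delta t),$$
whence the discrete Gronwall lemma gives $E^n \le e^{C(R,T)T}\,C(R,T)T\,(h+\Delta t)$, and adding back the $O(h)$ reconstruction error delivers (\ref{errorbound}). The hard part will be the consistency analysis of the nonlinear birth term: because the scheme is non-conservative and uses the split upper limit $p_j^i$ (with the diagonal $j=i$ integrated only up to $x_i$), one must carefully match the discrete double sum with the continuous double integral $\int_0^R\int_x^R$, control the quadrature of the inner fragment integral of $b$, and confirm that the combined kernel-projection and midpoint errors are genuinely first order. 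The supporting estimate that makes everything first order is the uniform spatial Lipschitz regularity of the exact solution on $[0,T]$, which must be established from the regularity of the data and kernels.
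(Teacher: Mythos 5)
Your proposal is correct in substance and would deliver the stated first-order bound, but it is organized differently from the paper's proof, so a comparison is worthwhile. You measure the error at the discrete level against the cell averages $\bar c_i^n$ of the exact solution, split the defect into consistency and stability parts, close a discrete Gronwall recursion for $E^n=\sum_i|e_i^n|\Delta x_i$, and recover (\ref{errorbound}) by a final triangle inequality with the $O(h)$ reconstruction error. The paper never introduces cell averages: it compares the piecewise-constant function $c^h$ directly with $c$ in $L^1$. Its key step is the lemma containing (\ref{convert1}), which rewrites the discrete birth--death operator as the continuous operator acting on $c^h$, with the kernels replaced by their projections $K^h,b^h$ and the lower integration limit $x$ replaced by $\Xi^h(x)$, plus an $O(h)$ remainder $\varepsilon(h)$ that is exactly the diagonal $j=i$ contribution you flag (the $p_j^i=x_i$ truncation); the difference $\|c^h(t)-c(t)\|_{L^1}$ is then estimated in (\ref{errorfull}) through the terms $(CB)_1$--$(CB)_3$ plus a time-discretization defect, and the \emph{continuous} Gronwall lemma concludes. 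The substantive ingredients coincide in the two routes: $O(h)$ projection error for $W^{1,\infty}_{loc}$ kernels, wholesale $O(h)$ treatment of the diagonal term and of the strip $y\in(x,\Xi^h(x))$, bilinear splitting of the quadratic nonlinearity for stability, and Gronwall. Both also hinge on what you correctly identify as the hard prerequisite: propagation of $L^\infty$ and $W^{1,\infty}$ regularity to the exact solution uniformly on $[0,T]$, which the paper establishes separately in Proposition \ref{bound2} (estimate (\ref{bound1})) before the main argument, and which your proposal defers but does not supply --- in a complete write-up this is a proposition-sized task, not a remark. A small advantage of your bookkeeping is that the stability step needs only the $L^1$ mass bounds (\ref{36}) on $c^h$ and on the averages of $c$, whereas the paper's estimates (\ref{error3})--(\ref{error4}) also invoke the $L^\infty$ bounds of both solutions; a small cost is that the midpoint-quadrature defects must be handled explicitly in your consistency term, while in the paper they are absorbed into the definitions of $K^h$, $b^h$ and the remainder $\varepsilon(h)$.
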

	
Before proving the Theorem, consider the following proposition, which provides an estimate on the approximate solution $c^h$ and the exact solution $c$ given certain additional assumptions. These estimates are important in the analysis of  the error.
	\begin{propos}\label{bound2}
Assume that kinetic parameters $K \in L^{\infty}_{loc}(\mathbb{R}^{+} \times \mathbb{R}^{+})$, $b\in L^{\infty}_{loc}(\mathbb{R}^{+} \times \mathbb{R}^{+}\times \mathbb{R}^{+})$ and  the condition (\ref{22}) holds for time step $\Delta t$. Also, let the initial  datum $c^{in}$  restricted in $ L^{\infty}_{loc}$. Then, solution $c^h$ and $c$ to (\ref{maineq}) are essentially bounded in $(0,T)\times (0, R)$ as
$$ \|c^h\|_{L^{\infty}((0,T)\times (0, R))}\leq H(T, R), \hspace{0.4cm} \|c\|_{L^{\infty}((0,T)\times (0, R))}\leq H(T, R). $$
Furthermore, if the kernels $K \in    W^{1,\infty}_{loc}(\mathbb{R}^{+} \times \mathbb{R}^{+})$, $b\in W^{1,\infty}_{loc}(\mathbb{R}^{+} \times \mathbb{R}^{+}\times \mathbb{R}^{+})$ and $c^{in} \in W^{1,\infty}_{loc}(\mathbb{R}^{+})$. Then there exists a positive constant $H(T, R)$ such that
\begin{align}\label{bound1}
\|c\|_{W^{1,\infty}(0, R)} \leq H(T, R).
\end{align}
\end{propos}
\begin{proof}
The purpose is to connect the continuous Eq.(\ref{maineq}) to the bounded solution $c$. In consequence  integrating Eq.(\ref{trunceq}) with respect to the time variable provides the following result
	\begin{align*}
c(t,x) \leq&c^{in}(x)+ \int_{0}^{t}\int_0^R\int_{x}^{R} K(y,z)b(x,y,z)c(s,y)c(s,z)dy\,dz\,ds \\ 
& \leq c^{in}(x)+\|K\|_{\infty}\|b\|_{\infty}{\|c\|}^{2}_{\infty,1}t,
\end{align*}	
	where $\|c\|_{\infty,1}$ represents the norm of $c$ in $L^{\infty}(0,T; L^{1}(\,0, R)\,)$. 
\begin{align*}
\|c\|_{L^{\infty}((0,T) \times (\,0, R)\,)} \leq H(T, R).
\end{align*}
Now, let us go to the culmination of an analysis of (\ref{bound1}). First, integrate Eq.(\ref{trunceq}) for the time variable $t$, and then differentiate it with respect to the volume variable $x$ yields 
\begin{align*}
\frac{\partial c(t,x)}{\partial x} \leq & \frac{\partial c^{in}(x)}{\partial x}+ \frac{\partial}{\partial x} \int_{0}^{t}\int_0^R\int_{x}^{R} K(y,z)b(x,y,z)c(s,y)c(s,z)dy\,dz\,ds \\
& -\frac{\partial}{\partial x} \int_{0}^{t}\int_{0}^{R}K(x,y)c(t,x)c(t,y)\,dy,
\end{align*}
use of the maximum value across the domain of $x$ and simplification of compuatation yield the following condition 
\begin{align*}
\left\|{\frac{\partial c(t)}{\partial x}}\right\|_{\infty} \leq & \left\|{\frac{\partial c^{in}(x)}{\partial x}}\right\|_{\infty}+ [\|Kb\|_{\infty}\|c\|_{\infty,1}\|c\|_{\infty}+ \|K\|_{\infty}\|b\|_{W^{1,\infty}}{\|c\|}^{2}_{\infty,1}\\
& + \|K\|_{W^{1,\infty}}\|c\|_{\infty,1}\|c\|_{\infty}
]t+ \|K\|_{\infty}\|c\|_{\infty,1}\int_{0}^{t}\left \|\frac{\partial c}{\partial x}\right\|_{\infty}\,ds,
\end{align*}
it has been written in a more coherent way
\begin{align*}
\left\|{\frac{\partial c(t)}{\partial x}}\right\|_{\infty}\leq \Upsilon (t)+\upsilon \int_{0}^{t}\left \|\frac{\partial c}{\partial x}\right\|_{\infty}\,ds,
\end{align*}
where 
$$\Upsilon (t)=\left\|{\frac{\partial c^{in}(x)}{\partial x}}\right\|_{\infty}+ [\|Kb\|_{\infty}\|c\|_{\infty,1}\|c\|_{\infty}+ \|K\|_{\infty}\|b\|_{W^{1,\infty}}{\|c\|}^{2}_{\infty,1}
 + \|K\|_{W^{1,\infty}}\|c\|_{\infty,1}\|c\|_{\infty}
]t,$$
$$\upsilon =\|K\|_{\infty}\|c\|_{\infty,1}.$$
Beyond that, the use of Gronwall's lemma and integration by parts establish the proof as follows
\begin{align*}
\left\|{\frac{\partial c(t)}{\partial x}}\right\|_{\infty} & \leq  \Upsilon (t)+ \int_{0}^{t}\Upsilon (s) \upsilon e^{\int_{s}^{t}\upsilon\,dr}\,ds\\	
& \leq \Upsilon (0) e^{\upsilon t} + (\|Kb\|_{\infty}\|c\|_{\infty,1}\|c\|_{\infty}+ \|K\|_{\infty}\|b\|_{W^{1,\infty}}{\|c\|}^{2}_{\infty,1}\\
&+ \|K\|_{W^{1,\infty}}\|c\|_{\infty,1}\|c\|_{\infty})[(e^{\upsilon t}-1)].
\end{align*}
Therefore
\begin{align*}
\left\|{\frac{\partial c}{\partial x}}\right\|_{L^{\infty}((0,T) \times (\,0, R)\,)} \leq H(T, R).
\end{align*}
It concludes the result (\ref{bound1}).
\end{proof}
The discrete collisional birth-death term given as in (\ref{fully}) expressed like 
\begin{align}\label{errord}
B_{C}(i)-D_{C}(i)=\frac{1}{\Delta x_i}\sum_{l=1}^{\mathrm{I}}\sum_{j=i}^{\mathrm{I}}K_{j,l}c^{n}_{j}c^{n}_{l}\Delta x_{j}\Delta x_{l}\int_{x_{i-1/2}}^{p_{j}^{i}}b(x,x_{j},x_{l})\,dx-\sum_{j=1}^{\mathrm{I}}K_{i,j}c^{n}_{i}c^{n}_{j}\Delta x_{j}
\end{align}	
The subsequent lemma offers a simplified version of the preceding discrete terms.
\begin{lem}
Consider the initial condition $c^{in}$ $\in W^{1,\infty}_{loc}$ and uniform mesh, $\Delta x_{i}=h$ $\forall i$. Also assuming that $K$ and $b$ follow the conditions   $K,b\in W^{1,\infty}_{loc}.$ Let $(s,x)\in \tau_{n}\times \Lambda_{i}^{h}$, where $n\in \{0,1,..., N-1\}\, , i\in\{1,2,...,\mathrm{I}\}$.Then
\begin{align}\label{convert1}
B_{C}(i)-D_{C}(i)=&\int_{0}^{R}\int_{{\Xi}^{h}(x)}^{R}K^h(y,z)b^{h}(x,y,z)c^{h}(s,y)c^{h}(s,z)\,dydz \nonumber \\ &-\int_{0}^{R}K^h(x,y)c^{h}(s,x)c^{h}(s,y)\,dy+\varepsilon(h),
\end{align}
In the strong $L^1$ topology, $\varepsilon(h)$  defines the first order term with regard to $h$: 
\begin{align}
\|\varepsilon(h)\|_{L^1}\leq \frac{\|Kb\|_{L^{\infty}}}{2}{\|c^{in}\|}^{2}_{L^1}\,e^{2\gamma R \|b\|_{L^{\infty}} M_{1}^{in} T} h.
\end{align}
\end{lem}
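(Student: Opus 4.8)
The plan is to convert every discrete sum in (\ref{errord}) into its piecewise-constant integral counterpart, keeping exact identities wherever the scheme already averages over full cells and collecting each genuine mismatch into the remainder $\varepsilon(h)$. I fix $(s,x)\in\tau_n\times\Lambda_i^h$ throughout and read $B_C(i)-D_C(i)$ as the value on that cell. First I would dispatch the death term: by the definitions (\ref{chap2:function_aggregatediscrete}) and (\ref{chap2:function_ch}), on $\Lambda_i^h\times\Lambda_j^h$ one has $K^h(x,y)=K_{i,j}$, $c^h(s,x)=c_i^n$ and $c^h(s,y)=c_j^n$, so that $\int_0^R K^h(x,y)c^h(s,x)c^h(s,y)\,dy=\sum_{j=1}^{\mathrm{I}}K_{i,j}c_i^n c_j^n\Delta x_j=D_C(i)$ \emph{exactly}. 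Hence the death term contributes nothing to $\varepsilon(h)$, and the whole remainder comes from the birth term.

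Next I would split the inner sum of $B_C(i)$ according to the definition of $p_j^i$: the diagonal block $j=i$ (upper limit $x_i$) and the off-diagonal block $j>i$ (upper limit $x_{i+1/2}$). For the off-diagonal block, note that $\Xi^h(x)=x_{i+1/2}$ on $\Lambda_i^h$, so the region $\{y>\Xi^h(x)\}$ is exactly $\bigcup_{j>i}\Lambda_j^h$. Writing $K_{j,l}c_j^n c_l^n\Delta x_j\Delta x_l=\int_{\Lambda_j^h}\int_{\Lambda_l^h}K^h(y,z)c^h(s,y)c^h(s,z)\,dy\,dz$ reproduces the continuous birth integral of (\ref{convert1}) except for the weighting of $b$: the scheme carries $\tfrac{1}{\Delta x_i}\int_{\Lambda_i^h}b(x,x_j,x_l)\,dx$ (midpoints in the last two slots), whereas $b^h=b_{i,j,l}$ is the full cell average. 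Their discrepancy is controlled by the $W^{1,\infty}_{\mathrm{loc}}$ regularity of $b$,
\[
\Big|\tfrac{1}{\Delta x_i}\!\int_{\Lambda_i^h}\! b(x,x_j,x_l)\,dx-b_{i,j,l}\Big|\le C\,\|b\|_{W^{1,\infty}}\,h,
\]
so this block matches the continuous integral up to an $O(h)$ term that is absorbed into $\varepsilon(h)$.

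The diagonal block $j=i$ equals $\sum_{l=1}^{\mathrm{I}}K_{i,l}c_i^n c_l^n\Delta x_l\int_{x_{i-1/2}}^{x_i}b(x,x_i,x_l)\,dx$. It has no counterpart in the continuous integral, whose $y$-integration starts at $\Xi^h(x)=x_{i+1/2}$, so the entire block is placed in $\varepsilon(h)$. Since its $b$-integral runs only over the half-cell $(x_{i-1/2},x_i)$ of length $\Delta x_i/2=h/2$, it is precisely the leading $O(h)$ contribution and the source of the factor $\tfrac12$. Bounding it pointwise on $\Lambda_i^h$ by $\|K\|_\infty\|b\|_\infty\tfrac{h}{2}\,c_i^n\sum_l c_l^n\Delta x_l$, integrating in $x$ over $(0,R)$ with $\Delta x_i=h$, and applying the growth bound (\ref{36}) to each factor $\sum_i c_i^n\Delta x_i=\int_0^R c^h(s,\cdot)$ gives
\[
\|\varepsilon(h)\|_{L^1}\le \frac{h}{2}\,\|Kb\|_{\infty}\Big(\textstyle\sum_{i=1}^{\mathrm{I}}c_i^n\Delta x_i\Big)^2\le \frac{\|Kb\|_{L^\infty}}{2}\,\|c^{in}\|_{L^1}^2\,e^{2\gamma R\|b\|_{L^\infty}M_1^{in}T}\,h,
\]
uniformly for $s\in[0,T]$; the off-diagonal mismatch is swept up as a lower-weight $O(h)$ term by the same argument.

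The main obstacle will be the bookkeeping that separates the \emph{exact} conversions (the death term and the $K^h,c^h$ factors) from the two genuine $O(h)$ remainders, namely the half-cell diagonal $b$-integral and the midpoint-versus-cell-average discrepancy of $b$, and then showing the accumulated remainder is controlled in $L^1$ uniformly in $s$. This is exactly where both the $W^{1,\infty}_{\mathrm{loc}}$ hypothesis on $b$ and the a priori estimate (\ref{36}) are indispensable.
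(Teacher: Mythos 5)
Your decomposition is the same one the paper uses: the death term converts \emph{exactly} into $\int_0^R K^h(x,y)c^h(s,x)c^h(s,y)\,dy$; the birth term is split at $p_j^i$ into the diagonal block $j=i$ and the off-diagonal block $j\ge i+1$; the diagonal block (a half-cell integral of $b$, whence the factor $\tfrac12$) is declared to be $\varepsilon(h)$; and its $L^1$ norm is bounded by $\tfrac{h}{2}\,\|Kb\|_{\infty}\big(\sum_i c_i^n\Delta x_i\big)^2$ followed by the a priori estimate (\ref{36}). That is, step for step, the paper's own proof, including the origin of the constant.

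The one place you genuinely diverge is the off-diagonal block, and your treatment there is actually more careful than the paper's. The paper asserts
\[
\sum_{l=1}^{\mathrm{I}}\sum_{j= i+1}^{\mathrm{I}}K_{j,l}c_j^{n}c_l^{n}\Delta x_j\Delta x_l\,\frac{1}{\Delta x_i}\int_{\Lambda_i^h}b(x,x_j,x_l)\,dx
=\int_0^R\!\int_{\Xi^h(x)}^R K^h(y,z)\,b^h(x,y,z)\,c^h(s,y)\,c^h(s,z)\,dy\,dz
\]
as an exact identity, silently ignoring that the scheme evaluates $b$ at the midpoints $(x_j,x_l)$ while $b^h$ in (\ref{chap2:function_brkdiscrete}) is the full triple cell average $b_{i,j,l}$; you correctly flag this mismatch and bound it by $C\|b\|_{W^{1,\infty}}h$. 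However, your conclusion then overreaches: once that mismatch is absorbed into $\varepsilon(h)$, the advertised bound with the specific constant $\frac{\|Kb\|_{L^\infty}}{2}\|c^{in}\|_{L^1}^2 e^{2\gamma R\|b\|_{L^\infty}M_1^{in}T}$ no longer holds --- an additional term of order $R\,\|K\|_{\infty}\|b\|_{W^{1,\infty}}h$ (times the same squared $L^1$ factor) must be added, and since it carries a different norm of $b$ it is not ``swept up by the same argument.'' This is harmless for the role the lemma plays in Theorem \ref{errorth1}, which only needs $\|\varepsilon(h)\|_{L^1}=O(h)$, but you should either state the enlarged constant explicitly, or, as the paper does, keep the mismatch out of $\varepsilon(h)$ --- at the price of conceding that (\ref{convert1}) then holds only up to a further $O(h)$ defect rather than exactly.
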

\begin{proof}
Initiate with a discrete birth term of Eq.(\ref{errord}) and convert it to a continuous form with a uniform mesh and $x \in \Lambda_{i}^{h}$,
\begin{align}\label{error1}
&\frac{1}{\Delta x_i}\sum_{l=1}^{\mathrm{I}}\sum_{j=i}^{\mathrm{I}}K_{j,l}c^{n}_{j}c^{n}_{l}\Delta x_{j}\Delta x_{l}\int_{x_{i-1/2}}^{p_{j}^{i}}b(x,x_{j},x_{l})\,dx \nonumber \\
&= \sum_{l=1}^{\mathrm{I}}\sum_{j=i+1}^{\mathrm{I}}K_{j,l}c^{n}_{j}c^{n}_{l}\Delta x_{j}\Delta x_{l}\frac{1}{\Delta x_i}\int_{x_{i-1/2}}^{x_{i+1/2}}b(x,x_{j},x_{l})\,dx
+ \sum_{l=1}^{\mathrm{I}}K_{i,l}c^{n}_{i}c^{n}_{l}\Delta x_{l}\int_{x_{i-1/2}}^{x_{i}}b(x,x_{i},x_{l})\,dx \nonumber \\
&= \int_{0}^{R}\int_{{\Xi}^{h}(x)}^{R}K^h(y,z)b^{h}(x,y,z)c^{h}(s,y)c^{h}(s,z)\,dydz+\varepsilon(h),
\end{align}
where $\varepsilon(h)= \sum_{l=1}^{\mathrm{I}}K_{i,l}c^{n}_{i}c^{n}_{l}\Delta x_{l}\int_{x_{i-1/2}}^{x_{i}}b(x,x_{i},x_{l})\,dx$ is defined. Calculating the $L^{1}$ norm of $\varepsilon(h)$ leads to the following term
\begin{align*}
\|\varepsilon(\mathbb{F},h)\|_{L^{1}}&\leq \|Kb\|_{L^{\infty}}\sum_{i=1}^{\mathrm{I}}c_{i}^{n}\Delta x_{i}\sum_{l=1}^{\mathrm{I}}c_{l}^{n}\Delta x_{l}\int_{x_{i-1/2}}^{x_{i}}\,dx\\\
& \leq \frac{\|Kb\|_{L^{\infty}}}{2}{\big(\sum_{i=1}^{\mathrm{I}}c_{i}^{n}\Delta x_{i}}\big)^{2}h\\
& \leq \frac{\|Kb\|_{L^{\infty}}}{2}{\|c^{in}\|}^{2}_{L^1}\,e^{2\gamma R \|b\|_{L^{\infty}} M_{1}^{in} T} h.
\end{align*}	
Now, taking the  discrete death term of (\ref{errord})
\begin{align}\label{error2}
\sum_{j=1}^{\mathrm{I}}K_{i,j}c^{n}_{i}c^{n}_{j}\Delta x_{j}=\int_{0}^{R}K^h(x,y)c^{h}(s,x)c^{h}(s,y)\,dy.
\end{align}
Using the formula (\ref{convert1}), Eq.(\ref{trunceq}) and Eq.(\ref{fully}), lead to error formulation for $t\in \tau_n$ as
\begin{align}\label{errorfull}
\int_{0}^{R}|c^h(t,x)-c(t,x)|dx \leq  \int_{0}^{R}|c^h(0,x)-c(0,x)|dx + \sum_{\beta=1}^{3}(CB)_{\beta}(h)\nonumber \\ 
+\int_{0}^{R}|\epsilon(t,n)|\,dx+ \|\varepsilon(h)\|_{L^{1}} t,
\end{align}
where error terms are expressed by $(CB)_{\beta}(h)$ for  $\beta=1,2,3$ 
\begin{align*}
(CB)_{1}(h)= \int_{0}^{t}\int_{0}^{R}\int_{0}^{R}\int_{\Xi^{h}(x)}^{R}|K^h(y,z)b^{h}(x,y,z)c^{h}(s,y)c^{h}(s,z)\\-K(y,z)b(x,y,z)c(s,y)c(s,z)|\,dy\,dz\,dx\,ds,
\end{align*}
\begin{align*}
(CB)_{2}(h)= \int_{0}^{t}\int_{0}^{R}\int_{0}^{R}\int_{x}^{\Xi^{h}(x)}K(y,z)b(x,y,z)c(s,y)c(s,z)\,dy\,dz\,dx\,ds,		
\end{align*}
and
\begin{align*}
(CB)_{3}(h)= \int_{0}^{t}\int_{0}^{R}\int_{0}^{R}|K^h(x,y)c^{h}(s,x)c^{h}(s,y)-K(x,y)c(s,x)c(s,y)|\,dy\,dx\,ds.
\end{align*}
Considering $|t-t_n|\leq \Delta t$,  the time discretization provides the following expression
\begin{align*}
\int_{0}^{R}|\epsilon(t,n)|\,dx\leq & \int_{t_n}^{t}\int_{0}^{R}\int_{0}^{R}\int_{\Xi^{h}(x)}^{R}K^h(y,z)b^{h}(x,y,z)c^{h}(s,y)c^{h}(s,z)\,dy\,dz\,dx\,ds \nonumber \\
& + \int_{t_n}^{t}\int_{0}^{R}\int_{0}^{R}K^h(x,y)c^{h}(s,x)c^{h}(s,y)\,dy\,dx\,ds \nonumber \\
&+ \int_{t_n}^{t}\int_{0}^{R} \varepsilon(h)\,dx\,ds.
\end{align*}
Given	$K, b\in W_{loc}^{1,\infty}$, we have $x,y\in (0,R]$ for 
	$$	|K^{h}(x,y)-K(x,y)|\leq \|K\|_{W^{1,\infty}} h. $$
As a result, it produces an estimate of $(CB)_{1}(h)$ using the $L^1$ bound on $c^h$ and $c$. To begin, divide the expression into four segments
\begin{align*}
(CB)_{1}(h)\leq & 	\int_{0}^{t}\int_{0}^{R}\int_{0}^{R}\int_{0}^{R}|K^h(y,z)-K(y,z)|b(x,y,z)c(s,y)c(s,z)\,dy\,dz\,dx\,ds\\
& + \int_{0}^{t}\int_{0}^{R}\int_{0}^{R}\int_{0}^{R}K^h(y,z)|b^{h}(x,y,z)-b(x,y,z)|c(s,y)c(s,z)\,dy\,dz\,dx\,ds\\
	& +  \int_{0}^{t}\int_{0}^{R}\int_{0}^{R}\int_{0}^{R} K^h(y,z)b^{h}(x,y,z)|c^{h}(s,y)-c(s,y)|c(s,z)\,dy\,dz\,dx\,ds  \\
	& + \int_{0}^{t}\int_{0}^{R}\int_{0}^{R}\int_{0}^{R}K^h(y,z)b^{h}(x,y,z)c^{h}(s,y)|c^{h}(s,z)-c(s,z)|\,dy\,dz\,dx\,ds.
	\end{align*}
By simplifying and employing Proposition (\ref{bound2}), the above may be transformed to
	\begin{align}\label{error3}
	(CB)_{1}(h) &\leq (\|K\|_{W^{1,\infty}} \|b\|_{\infty}+\|K\|_{\infty} \|b\|_{W^{1,\infty}})t  R^{3}  {\|c\|}^{2}_{\infty}   h \nonumber \\
	& + R^{2}\|K\|_{\infty}\|b\|_{\infty}(\|c\|_{\infty}+ \|c^h\|_{\infty})\int_{0}^{t}\|c^{h}(s)-c(s)\|_{L^1}\,ds,
	\end{align}
	similar estimation for $(CB)_{3}(h)$
	\begin{align}\label{error4}
	(CB)_{3}(h) \leq \|K\|_{W^{1,\infty}}t  R^{2}  {\|c\|}^{2}_{\infty}   h 
		 + R\|K\|_{\infty}(\|c\|_{\infty}+ \|c^h\|_{\infty})\int_{0}^{t}\|c^{h}(s)-c(s)\|_{L^1}\,ds.
	\end{align}
	Moving on to the remaining terms, $(CB)_{2}(h)$ and $\int_{0}^{R}|\epsilon(t,n)|\,dx$, it is clear that
	\begin{align}\label{error5}
	 (CB)_{2}(h)\leq \frac{t R^2}{2}\|Kb\|_{\infty}{\|c\|}^{2}_{\infty} h,
	\end{align}
	and 
	\begin{align}\label{error6}
	\int_{0}^{R}|\epsilon(t,n)|\,dx \leq (\|Kb\|_{\infty}{\|c^h\|}^{2}_{\infty}R^3+\|K\|_{\infty}{\|c^h\|}^{2}_{\infty}R^2+\|\epsilon(h)\|_{L^{1}})\Delta t.
	\end{align}
	Furthermore, substituting all of the estimations  (\ref{error3})-(\ref{error6}) in (\ref{errorfull}) and applying the Gronwall's lemma to conclude the result in (\ref{errorbound}).
\end{proof}
\section{Numerical Testing } \label{testing}
In this part of the article, the discussion over experimental error and experimental order of convergence (EOC) has been concluded for three combinations of collision kernel and breakage distribution function. As we are aware that the kernels must exist in  $ W^{1,\infty}_{loc}$ space for error estimation with uniform meshes. In two cases, no theoretical results are available in the literature. To test this problem's theoretical error estimation, we have elected three cases with exponential initial condition (IC) $c(0,x)=\exp(-x)$. To validate the result, the following collision  kernel ($K$), breakage distribution function ($b$) and IC  combinations are used:\\

\textbf{Test case 1:}  $ K(y,z)=1,  b(x,y,z)=\frac{(\aleph+2)x^{\aleph}}{(y)^{\aleph+1}}, -1<\aleph \leq 0$ with $\aleph=0$.\\
\textbf{Test case 2:}  $K(y,z)=y+z,  b(x,y,z)= \frac{(\aleph+2)x^{\aleph}}{(y)^{\aleph+1}}, -1< \aleph \leq 0$ with $\aleph=0$.\\
\textbf{Test case 3:} $ K(y,z)=y+z,   b(x,y,z)=\delta(x-0.4y)+\delta(x-0.6y)$.\\

The experimental domain of volume is [1e-3, 10] discretized into 30,60,120,240, and 480 cells and computations run from time 0 to 0.2. In order to observe the EOC of the FVS in each cell of the computational domain, the following relation is used to estimate result:
	\begin{align}
	EOC=\ln \left( \frac{\|N_{\mathrm{I}}-N_{2\mathrm{I}}\|}{\|N_{2\mathrm{I}}-N_{4\mathrm{I}}\|} \right)/\ln(2).
	\end{align}
Here, $N_{\mathrm{I}}$ denotes the total number of particles generated by the FVS (\ref{fully})  with a mesh of $\mathrm{I}$ number of cells.

	\begin{table}[!htb]
	 \begin{minipage}{.5\linewidth}
	  \centering
	    \begin{tabular}{ |p{1cm}|p{2cm}|p{1cm}|}
	    \hline
	      Cells      & Error & EOC\\
	       \hline
	       30    & - & -  \\
	          \hline
	      60 & 0.4377$\times 10^{-4}$ & - \\
	           \hline
	       120  &   0.2047$\times 10^{-4}$   &1.0963   \\
	          \hline
	        240  & 0.0989$\times 10^{-4}$ & 1.0501  \\
	          \hline
	      480  & 0.0485$\times 10^{-4}$ & 1.0265   \\
	 \hline
	           \end{tabular}
	            \caption{ Test case 1}
	           \end{minipage}
	           \begin{minipage}{.5\linewidth}
	           \centering
	            \begin{tabular}{ |p{1cm}|p{2cm}|p{1cm}|}
	            \hline
	            Cells      & Error & EOC\\
	            \hline
	              30    & - &-   \\
	                 \hline
	              60& 0.4879$\times 10^{-4}$  &- \\
	             \hline
	              120   & 0.2196$\times 10^{-4}$    & 1.1515 \\
	                 \hline
	               240  & 0.1032$\times 10^{-4}$ & 1.0901  \\
	               \hline
	               480  & 0.0498$\times 10^{-4}$ & 1.0497    \\
	            \hline
	            \end{tabular}
	             \caption{ Test case 2}
	              \end{minipage}
	          
	           \end{table} 
	            
	           	\begin{table}[!htb]
	           	
	           	  \centering
	           	    \begin{tabular}{ |p{1cm}|p{2cm}|p{1cm}|}
	           	    \hline
	           	      Cells      & Error & EOC\\
	           	       \hline
	           	       30    & - & -  \\
	           	          \hline
	           	      60 & 0.4309$\times 10^{-4}$ & - \\
	           	           \hline
	           	       120  &   0.2023$\times 10^{-4}$   &1.0905   \\
	           	          \hline
	           	        240  & 0.0981$\times 10^{-4}$ & 1.0452  \\
	           	          \hline
	           	      480  & 0.0483$\times 10^{-4}$ & 1.0226   \\
	           	 \hline
	           	           \end{tabular}
	           	            \caption{ Test case 3}
	           	           \end{table}
Tables 1, 2 and 3 represent the error and EOC for uniform mesh. In addition, The numerical errors are measured using  30, 60, 120, 240, and 480  cells, and the scheme provides the error in decreasing mode. The tables depict that the FVS yields first-order convergence, as predicted by theoretical results in section \ref{Error}.
\section{Conclusion}\label{conclusion}
This article proposes the finite volume scheme  for the collisional breakage equation for the non-uniform mesh. It yields a non-conservative scheme, for which a weak convergence analysis has been executed with unbounded collision and breakage distribution kernels. Where numerical truncated solution convergences to a weak solution of the problem. It is accomplished in the presence of the Weak $L^1$ compactness method based on Dunford-Pettis and La Vall$\acute{e}$e Poussin theorems.  In addition, explicit error estimation of the method is also explored for the locally bounded kernels. It has been demonstrated that the FVS
is first-order accurate for uniform meshes. Moreover, We also compared experimental result to theoretical result for various combinations of collision kernel and breakage distribution function.

\bibliography{colref}
\bibliographystyle{ieeetr}
				\end{document}